\spnewtheorem*{notation}{Notation}{\itshape}{\rmfamily}
\title{Characterising Complexity Classes by Inductive Definitions in
Bounded Arithmetic%
\thanks{This work started as part of 
        \href{http://www.templeton.org/what-we-fund/grants/philosophical-frontiers-in-reverse-mathematics}{\textit{Philosophical
	frontiers in Reverse Mathematics}}
        sponsored by the John Templeton Foundation.}
}
\author{Naohi Eguchi%
\thanks{The author is supported by JSPS posdoctoral fellowships for young
	scientists.%
       }
}
\institute{%
%Graduate School of Science, Chiba University, Japan \\
%\email{tosarai@faculty.chiba-u.jp}
%\and 
Institute of Computer Science, University of Innsbruck,
Austria \\
\email{naohi.eguchi@uibk.ac.at}
}
\date{June 7, 2013}
\begin{document}

\maketitle

\begin{abstract}
Famous descriptive characterisations of $\mathrm P$ and 
$\mathrm{PSPACE}$ are restated in terms of the Cook-Nguyen style second order bounded arithmetic. 
We introduce an axiom of inductive definitions over second  order bounded arithmetic.
We show that $\mathrm P$ can be captured by the axiom of inflationary
 inductive definitions whereas $\mathrm{PSPACE}$ can be captured by the
 axiom of non-inflationary inductive definitions.
\end{abstract}

\section{Introduction}

The notion of inductive definitions is widely accepted in logic and
mathematics. 
Although inductive definitions usually deal with infinite sets,
we can also discuss finitary inductive definitions.
Let $S$ be a finite set and 
$\Phi: \mathcal P (S) \rightarrow \mathcal P (S)$
an {\em operator}, 
a mapping over the power set $\mathcal P (S)$ of $S$.
For a natural $m$, define a subset $P^m_\Phi$ of $S$ inductively by
$P^0_\Phi = \emptyset$ and
$P^{m+1}_{\Phi} = \Phi (P^m_\Phi)$.
If the operator $\Phi$ is {\em inflationary}, i.e.,
if $X \subseteq \Phi (X)$ holds for any $X \subseteq S$, then
there exists a natural $k \leq |S|$ such that
$P^{k+1}_{\Phi} = P^k_\Phi$, where $|S|$ denotes the number of elements
of $S$, and hence the operator $\Phi$ has a fixed
point.
On the side of finite model theory, %based on this observation, 
a famous descriptive characterisation of the class of $\mathrm{P}$ of
polytime predicates was given by N. Immerman \cite{Immerman82} and
M. Y. Vardi \cite{Vardi82}. 
It is shown that the class $\mathrm{P}$ can be captured by the first order
predicate logic with fixed point predicates of first order definable
inflationary operators.
In case that the operator $\Phi$ is not inflationary, it is not in general
possible to find a fixed point of $\Phi$.
One can however find two naturals $k, l \leq 2^{|S|}$ such that
$l \neq 0$ and $P^{k+l}_{\Phi} = P^k_{\Phi}$.
Based on this observation, it is shown that the class
$\mathrm{PSPACE}$ of polyspace predicates %by Neil Immerman 
can be captured by the first order predicate logic with fixed point predicates
of first order definable (non-inflationary) operators, cf. \cite{FMT99}.
On the side of bounded arithmetic, it was shown by S. Buss
that
$\mathrm{P}$ can be captured by a
first order system $\mathrm S^{1}_2$ whereas
$\mathrm{PSPACE}$ can be captured by a second order extension 
$\mathrm U^{1}_2$ of  $\mathrm S^{1}_2$, cf. \cite{buss86}.
An alternative way to characterise $\mathrm P$ was invented by
D. Zambella \cite{Zam96}.
As well as Buss' characterisation by $\mathrm S^1_2$,  $\mathrm{P}$ can be captured by a
certain form of comprehension axiom over a weak second order system of
bounded arithmetic.
A modern formalisation of Zambella's idea including further discussions
can be found in the book \cite{CN10} by S. Cook and P. Nguyen.
More recently, A. Skelley in \cite{skelley04} extended this idea to a third order
formulation of bounded arithmetic, capturing $\mathrm{PSPACE}$ as well as Buss'
characterisation by $\mathrm U^1_2$.
%Each of $\mV{1}$ and $\mW{1}{1}$ is based on a comprehension axiom (or equivalently an axiom of induction). 
%Shioji and Tanaka \cite{ShiojiT90},
On the other side, as discussed by K. Tanaka \cite{Tanaka89,Tanaka91}
and others, cf. \cite{Poh98}, inductive definitions over infinite sets of naturals can be
axiomatised over second order arithmetic the most elegantly.
All these motivate us to introduce an axiom of inductive definitions
over second order bounded arithmetic.
Let us recall that for each $i \geq 0$ the class $\BSig{i}$ of formulas is
defined in the same way as the class $\mSigma^{1}_{i}$ of second order
formulas, but only {\em bounded quantifiers} are taken into account.
We show that, over a suitable base, system the class $\mathrm{P}$ can be
captured by the axiom of inductive definitions under
$\BSig{0}$-definable inflationary operators 
(Corollary \ref{c:P}) whereas
$\mathrm{PSPACE}$ can be captured by the axiom of inductive
definitions under $\BSig{0}$-definable (non-inflationary) operators
(Corollary \ref{c:PSPACE}).
There is likely no direct connection, but this work is also partially
motivated by the axiom AID of Alogtime inductive definitions introduced
by T. Arai in \cite{Arai00}.

After the preliminary section, in Section \ref{s:AID} we introduce a system 
$\BSig{0} \text{-IID}$ of inductive definitions under
$\BSig{0}$-definable inflationary operators and a system
$\BSig{0} \text{-ID}$ of inductive definitions under
$\BSig{0}$-definable (non-inflationary) operators. 
In Section \ref{s:P} we show that every polytime function can be defined
in $\BSig{0} \text{-IID}$.
In Section \ref{s:IID} we show that conversely the system
$\BSig{0} \text{-IID}$ can only define polytime functions
by reducing $\BSig{0} \text{-IID}$ to Zambella's system $\mV{1}$.
In Section \ref{s:PSPACE} we show that every polyspace function can be defined
in $\BSig{0} \text{-ID}$.
In Section \ref{s:ID} we show that conversely the system
$\BSig{0} \text{-ID}$ can only define polyspace functions
by reducing $\BSig{0} \text{-ID}$ to Skelley's system $\mW{1}{1}$.

\section{Preliminaries}

The {\em two-sorted first order vocabulary} $\LA{2}$ consists of 
$0$, $1$, $+$, $\cdot$, $|~|$, $=_1$, $=_2$, $\leq$ and $\in$.
At the risk of confusion, we also call $\LA{2}$ the {\em second order vocabulary
of bounded arithmetic}.
Note that $=_1$ and $=_2$ respectively denote the first order and the
second order equality, and $t =_1 s$ or $U =_2 V$ will be simply written
as $t=s$ or $U=V$.
%Note however that the second order equality $U =_2 V$ can be replaced with
%$|U| = |V| \wedge 
% (\forall i < |U|) U(i) \leftrightarrow V(i)$.
First order elements $x, y, z, \dots$ denote natural numbers whereas
seconder order elements $X, Y, Z, \dots$ denote binary strings.
The formula of the form $t \in X$ is abbreviated as $X(t)$.
Under a standard interpretation, $|X|$ denotes the length of the string
$X$, and $X(i)$ holds if and only if the $i$th bit of $X$ is $1$.
Let $\mathcal L$ be a vocabulary  such that
$\mathcal{L}^2_A \subseteq \mathcal L$.
We follow a convention that for an $\mathcal L$-term $t$,
a string variable $X$ and a formula $\varphi$,
$(\exists X \leq t) \varphi$ stands for 
$\exists X (|X| \leq t \wedge \varphi)$ and
$(\forall X \leq t) \varphi$ stands for 
$\forall X (|X| \leq t \rightarrow \varphi)$.
Furthermore $(\exists \vec x \leq \vec t) \varphi$ stands for
$(\exists x_1 \leq t_1) \cdots (\exists x_k \leq t_k) \varphi$
if 
$\vec x = x_1, \dots, x_k$ and
$\vec t = t_1, \dots, t_k$. 
We follow similar conventions for 
$(\forall \vec x \leq \vec t) \varphi$,
$(\exists \vec X \leq \vec t) \varphi$
and
$(\forall \vec X \leq \vec t) \varphi$.
A quantifier of the form $(Q x \leq t)$ or $(Q X \leq t)$ is called a
{\em bounded quantifier}.
Specific classes $\BSig{i} (\mathcal L)$ and 
$\BPi{i} (\mathcal L)$
($0 \leq i$) are defined by the following clauses.

\begin{enumerate}
\item $\BSig{0} (\mathcal L) = \BPi{0} (\mathcal L)$ 
      is the set of $\mathcal L$-formulas whose quantifiers are bounded
      number ones only.
\item $\BSig{i+1} (\mathcal L)$ 
      ($\BPi{i+1} (\mathcal L)$ resp.) is the set of formulas
      of the form $(\exists \vec{X} \leq \vec{t}) \varphi (\vec{X})$
      ($(\forall \vec X \leq \vec t) \varphi (\vec X)$ resp.),
      where $\varphi$ is a $\BPi{i} (\mathcal L)$-formula
      (a $\BSig{i} (\mathcal L)$-formula resp.) and
      $\vec t$ is a sequence of $\mathcal L$-terms not involving any
      variables from $\vec{X}$.
\end{enumerate}
Finally the class $\BDel{i} (\mathcal L)$ is defined in the most natural way for
each $i \geq 0$.
We simply write $\BSig{i}$ ($\BPi{i}$ resp.) to denote
$\BSig{i} (\LA{2})$ 
($\BPi{i} (\LA{2})$ resp.)
if no confusion likely arises.
Let us recall that for each $i \geq 0$ the system $\mV{i}$ is
axiomatised over $\LA{2}$ by the defining axioms for numerical and string function
symbols in $\LA{2}$ (B1--B12, L1, L2 and SE, see \cite[p. 96]{CN10}) and
the axiom $(\BSig{i} \text{-COMP})$ of comprehension for 
$\BSig{i}$ formulas:
\begin{equation}
\tag{$\BSig{i} \text{-} \mathrm{COMP}$}
\forall x (\exists Y \leq x) (\forall i < x)
[Y(i) \leftrightarrow \varphi (i)],
\end{equation}
where $\varphi \in \BSig{i}$.
We will use the following fact frequently.

\begin{proposition}
[Zambella \cite{Zam96}]
\label{p:comp_ind}
{\normalfont (Cf. \cite[p. 98, Corollary V.1.8]{CN10})}
The axiom $(\BSig{i} \text{-} \mathrm{IND})$ of induction for 
$\BSig{i}$ formulas holds in $\mV{i}$.
\end{proposition}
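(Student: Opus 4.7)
The plan is to derive $(\BSig{i}\text{-IND})$ as a direct consequence of $(\BSig{i}\text{-COMP})$, exploiting the fact that induction for atomic formulas — a special case of $(\BSig{0}\text{-IND})$ — is already available in the base theory underlying $\mV{i}$.

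First I would fix a $\BSig{i}$-formula $\varphi(x)$ together with the induction hypotheses $\varphi(0)$ and $\forall x(\varphi(x)\to\varphi(x+1))$, and aim to prove $\varphi(n)$ for an arbitrary $n$. Applying $(\BSig{i}\text{-COMP})$ with the numerical parameter $n+1$ produces a string $Y$ with $|Y|\leq n+1$ such that $Y(i)\leftrightarrow\varphi(i)$ for every $i<n+1$. In particular $Y(0)$ holds, and translating the step hypothesis through this equivalence one obtains $\forall x<n\,(Y(x)\to Y(x+1))$.

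Next I would observe that the formula $\psi(x):=Y(x)$ is atomic, hence trivially in $\BSig{0}$, so that $(\BSig{0}\text{-IND})$ applied to $\psi$ yields $Y(n)$, which gives $\varphi(n)$ via the comprehension equivalence. This is essentially Zambella's original argument: comprehension collapses induction on complex formulas onto induction on bit-predicates of an explicitly produced string.

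The one point that has to be checked is that $(\BSig{0}\text{-IND})$ is genuinely available in $\mV{i}$. This is standard in the Cook--Nguyen framework: the defining axioms B1--B12, L1, L2, SE together with $(\BSig{0}\text{-COMP})$ already suffice to derive $(\BSig{0}\text{-IND})$ in $\mV{0}$ (either by the same comprehension trick reducing to induction on an atomic predicate together with a least-element argument on the witnessing string, or by a direct number-induction argument valid for open formulas once the finite set is fixed). Since $\mV{i}$ extends $\mV{0}$, the step is immediate, and the whole reduction is a one-line derivation. The main conceptual obstacle is therefore just to isolate the right instance of comprehension with the right bound $n+1$; once this is done the induction on $\varphi$ is transported verbatim to induction on $Y(\cdot)$.
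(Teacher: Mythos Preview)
Your argument is correct and is precisely the standard comprehension-to-induction reduction from Zambella and Cook--Nguyen; the paper itself does not supply a proof but merely cites \cite{Zam96} and \cite[Corollary V.1.8]{CN10}, so there is nothing further to compare.
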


Let $\LA2 \subseteq \mathcal L$.
For a string function $f$, a class $\Phi$ of $\mathcal L$-formulas and a
system $T$ over $\mathcal L$,
we say $f$ is $\Phi$-definable in $T$ if there exists an
$\mathcal L$-formula
$\varphi (\vec X, Y) \in \Phi$ such that
\begin{itemize} 
\item $\varphi$ does not involve free variables other than 
      $\vec X$ nor $Y$,
\item the graph $f(\vec X) = Y$ of $f$ is expressed by
      $\varphi (\vec X, Y)$ under a standard interpretation as mentioned
      at the beginning of this section, and
\item the sentence
      $\forall \vec X \exists ! Y \varphi (\vec X, Y)$
      is provable in $T$.
\end{itemize}
\label{d:definable}
Note that every function over natural numbers can be regarded as a string
one by representing naturals in their binary expansion.

\begin{proposition}[Zambella \cite{Zam96}]
\label{p:ptime}
{\normalfont (Cf. \cite[p. 135, Theorem VI.2.2]{CN10})}
A function is polytime computable if and only if it is 
$\BSig{1}$-definable in $\mV{1}$.
\end{proposition}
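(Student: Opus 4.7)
The plan is to prove both implications separately. For the completeness direction — every polytime function is $\BSig{1}$-definable in $\mV{1}$ — I would follow Cobham's recursion-theoretic characterisation of $\mathrm{P}$. Define the polytime functions inductively from a set of initial functions closed under composition and limited recursion on notation. The initial functions (projections, bit, successor-on-notation, string concatenation and the like) are directly $\BSig{0}$-definable, hence $\BSig{1}$-definable. Closure under composition is routine. The crucial closure property is limited recursion on notation: given already-defined $g, h_0, h_1$ and a length bound $t(\vec X)$, one must produce the value of the recursion. I would encode the entire sequence of intermediate values as a single string $Z$ whose $i$-th ``row'' is the value at step $i$. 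The predicate expressing that $Z$ is a correct trace and that its final row equals $Y$ is $\BSig{0}$, so the graph $Y = f(\vec X)$ becomes $\BSig{1}$. Existence of $Z$ is obtained by applying $\BSig{0} \text{-} \mathrm{COMP}$ to the formula that defines each row bit from the previous row, and uniqueness then follows from $\BSig{0}$-induction on the row index, which is available in $\mV{1}$ by Proposition~\ref{p:comp_ind}.

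For the soundness direction — every $\BSig{1}$-definable function in $\mV{1}$ is polytime computable — the standard tool is a witnessing theorem for $\mV{1}$: if $\mV{1} \vdash \forall \vec X \exists Y\, \varphi(\vec X, Y)$ with $\varphi \in \BSig{0}$, then some polytime $f$ satisfies $\varphi(\vec X, f(\vec X))$ in the standard model. I would establish this via a proof-theoretic argument based on free-cut elimination in a sequent calculus presentation of $\mV{1}$, then extract witnesses from the anchored proof by induction on its depth. Each rule is handled by exhibiting how to update the witnessing term in polynomial time; the only nontrivial case is the $\BSig{0} \text{-} \mathrm{COMP}$ axiom, which is witnessed by constructing its polynomial-length characteristic string bit by bit — precisely a polytime operation. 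Once the witnessing theorem is in place, the uniqueness clause in the definition of $\BSig{1}$-definability guarantees that the extracted polytime function computes the intended $f$ on every input.

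The hard part will be the witnessing theorem used in the soundness direction. Free-cut elimination for second-order bounded arithmetic together with the bookkeeping needed to keep the witnessing terms within polynomial size is technically delicate, though it follows an established pattern (the Zambella/Cook-Nguyen treatment). An alternative via a KPT-style or model-theoretic bootstrapping argument is possible but carries comparable overhead. The completeness direction is comparatively straightforward once one commits to the trace-encoding strategy and confirms that all verification steps fall within $\BSig{0} \text{-} \mathrm{COMP}$ and $\BSig{0} \text{-} \mathrm{IND}$.
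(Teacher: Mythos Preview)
The paper does not actually prove this proposition; it is stated as a known result attributed to Zambella, with a pointer to Cook--Nguyen for the detailed treatment. Your outline is a faithful sketch of the standard argument found in those references, so in that sense there is nothing to compare against and your plan is sound.

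One small technical correction in the completeness direction: you say that existence of the trace string $Z$ is obtained by applying $\BSig{0}\text{-}\mathrm{COMP}$ to ``the formula that defines each row bit from the previous row.'' But that formula is self-referential---row $i{+}1$ depends on row $i$, which lives inside $Z$ itself---so a single application of $\BSig{0}\text{-}\mathrm{COMP}$ cannot produce $Z$. The standard argument instead proves, by $\BSig{1}$-number-induction on $k$, that a partial trace encoding the first $k$ rows exists (a $\BSig{1}$ statement, available in $\mV{1}$ by Proposition~\ref{p:comp_ind}); this is precisely the step that needs the strength of $\mV{1}$ over $\mV{0}$. Uniqueness is then by $\BSig{0}$-induction, as you say. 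With that adjustment your outline matches the textbook proof.
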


\section{Axiom of Inductive Definitions}
\label{s:AID}

In this section we introduce an axiom of inductive definitions.
We work over a conservative extension of $\mV{0}$.
For the sake of readers' convenience, from Cook-Nguyen \cite{CN10}, we recall
several string functions, all of which have $\BSig{0}$-definable bit-graphs.
Let
$\numseq{x,y} = (x+y) (x+y+1) +2y$
be a standard numerical paring function.
Clearly the paring function is definable in $\LA{2}$.
%The paring function can be generalised to a tuple function 
%$\numseq{x_1, \dots, x_{k+1}}$ by
%$\numseq{x_1, \dots, x_{k+1}} = 
% \numseq{\numseq{x_1, \dots, x_k}, x_{k+1}}$.

({\em String encoding} \cite[p. 114 Definition V.4.26]{CN10}) 
The $x$th component $Z^{[x]}$ of a string $Z$ is defined by the axiom
$
 Z^{[x]} (i) \leftrightarrow i< |Z| \wedge Z(\numseq{x, i})
$.

({\em Encoding of bounded number sequences}
\cite[p. 115 Definition V.4.31]{CN10}) 
The $x$th element $(Z)^{x}$ of the sequence encoded by $Z$ is defined by
the axiom
\begin{equation*}
\begin{array}{rclc}
 (Z)^{x} = y & 
 \leftrightarrow & 
 [y < |Z| \wedge Z(\numseq{x, y}) \wedge 
  (\forall z < y) \neg Z(\numseq{x, z})
 ] 
 &
\vee 
\\ &&
 [y=|Z| \wedge (\forall z < y) \neg Z(\numseq{x, z})
 ]. 
 &
\end{array}
\end{equation*}

({\em String paring} \cite[p. 243, Definition VIII.7.2]{CN10})
The string function $\numseq{X,Y}$ is defined by the axiom
\begin{equation*}
\numseq{X_0, X_1} (i) \leftrightarrow
(\exists j \leq i) [(i = \numseq{0,j} \wedge X_0 (j)) \vee
                    (i = \numseq{1,j} \wedge X_1 (j))].
\end{equation*} 
Correspondingly, a pair of strings can be unpaired as
$\numseq{Z_0, Z_1}^{[i]} = Z_i$ ($i=0, 1$).

({\em String constant, string successor, string addition} 
\cite[p. 112, Example V.4.17]{CN10})
The string constant $\emptyset$ is defined by the axiom
$\emptyset (i) \leftrightarrow i < 0$.
The string successor $S(X)$ is defined by the axiom
\begin{equation*}
%\begin{array}{rcll}
S(X) (i) \leftrightarrow i \leq |X|  \wedge 
[X(i) \wedge (\exists j < i) \neg X(j)] \vee 
[\neg X(i) \wedge (\forall j < i) X(j)]. 
%\end{array}
\end{equation*} 
The string addition $X+Y$ is defined by the axiom
\begin{equation*}
(X+Y)(i) \leftrightarrow 
(i < |X| + |Y| \wedge 
 (X(i) \oplus Y(i) \oplus Carry (i, X, Y))
),
\end{equation*}
where $\oplus$ denotes ``exclusive or'', i.e., 
$p \oplus q \equiv
 (p \wedge \neg q) \vee (\neg p \wedge q)$, and
\begin{equation*}
Carry (i, X, Y) \leftrightarrow
(\exists k < i) 
[ X(k) \wedge Y(k) \wedge
 (\forall j < i) (k<j \rightarrow X(j) \vee Y(j)
].
\end{equation*}

({\em String ordering} \cite[p. 219, Definition VIII.3.5]{CN10})
The string relation $X < Y$ is defined by the axiom
\begin{eqnarray*}
X < Y &\leftrightarrow&
%X = Y \vee (
|X| \leq |Y| \wedge \\
&&
(\exists i \leq |Y|)
[(\forall j \leq |Y|)(i < j \wedge X(j) \rightarrow Y(j)) \wedge
 Y(i) \wedge \neg X(i)
].
%).
\end{eqnarray*}
We write $X \leq Y$ to denote %the strict part of $\leq$, i.e.,
$X = Y \vee X < Y$.
%\begin{notation}
In addition, we write $x \minus y$ to denote the {\em limited subtraction}:
$x \minus y = \max \{ 0, x-y \}$, and
$|x|$ to denote the {\em devision} of $x$ by $2$:
$|x| = \lfloor x / 2 \rfloor$.
We will write $x - y = z$ if $x \minus y = z$ and $y \leq x$.
We expand the notion of ``$\Phi$-definable in $T$'' 
(presented on page \pageref{d:definable}) to those functions
 involving the numerical sort in addition to the string sort in an obvious way.
Then it can be shown that both $x \minus y$ and $|x|$ are 
$\BSig{0}$-definable in $\mV{0}$, cf. \cite[p. 60]{CN10}.
Furthermore, though much harder to show, it can be also shown that
a limited form of exponential,
$Exp (x, y) = \min \{ 2^x, y\}$, is 
$\BSig{0}$-definable in $\mV{0}$, cf. \cite[p. 64]{CN10}.
%\end{notation}
It is known that if $\mV{0}$ is augmented by adding a collection of
$\BSig{0}$-defining axioms for numerical and string functions, then the resulting
system is a conservative extension of $\mV{0}$, cf. 
\cite[p. 110, Corolalry V.4.14]{CN10}.
Hence we identify $\mV{0}$ with the system resulting by augmenting 
$\mV{0}$
by adding the $\BSig{0}$-defining axioms for those numerical and string
functions and relations defined above.

Furthermore we work over a slight extension of the vocabulary $\LA2$. 
For a formula $\varphi (i, X)$ let $P_{\varphi} (i, x, X)$ denote a
fresh predicate symbol, where $\varphi$ may contain free variables other than 
$i$ and $X$.
%, and $P_{\varphi}$ contains all the free variables appearing in $\varphi$.
%We write $P_{\varphi, x}^X (i)$ instead of $P_{\varphi} (x, X) (i)$.
We write $\LID$ to denote the vocabulary expanded with the new predicate
$P_{\varphi}$ for each $\varphi$.
\begin{definition}[Extension by fixed point predicates]
\label{d:P}
\normalfont
For a system $T$ over a vocabulary $\mathcal L$ such that
$\LA2 \subseteq \mathcal L$,
$T (\LID)$ denotes the conservative extension of $T$
obtained by augmenting $T$ with the following defining axioms for 
$P_{\varphi}$.
\begin{enumerate}
\item $(\forall i < x) [ P_{\varphi} (i, x, \emptyset)
                          \leftrightarrow i < 0
                       ]$.
\label{d:P:1}
\item $(\forall X \leq x+1) 
       (\forall i < x)
       \left[
        P_{\varphi} (i, x, S(X)) \leftrightarrow 
         \varphi (i, P_{\varphi, x}^X)
       \right]$,
       where $\varphi (i, P_{\varphi, x}^X)$ denotes the result of
      replacing every occurrence of $Y(j)$ in $\varphi (i, Y)$ with 
      $P_{\varphi} (j, x, X) \wedge j < x$.
\label{d:P:2}
\end{enumerate}
\end{definition}
\noindent
Now we introduce an {\em axiom of inductive definitions}.

\begin{definition}[Axiom of Inductive Definitions]
\label{d:ID}
\normalfont
Let $\Phi$ be a class of formulas.
Then the axiom schema $(\Phi \text{-} \mathrm{ID})$ of inductive
definitions denotes (the universal closure of)
%asserts that for any natural $x$ there exist two strings 
%$U$ and $V$ such that $$, $$ fulfilling
the following formula, where $\varphi \in \Phi$.
\begin{equation}
\tag{$\Phi$ \text{-ID}}
 (\exists U, V \leq x+1)
 \left[
 V \neq \emptyset \wedge
 (\forall i < x)
 \left(
 P_{\varphi} (i, x, U+V) \leftrightarrow P_{\varphi} (i, x, U)
 \right)
 \right]
%\label{d:ID:4}
\end{equation}
We write $(\Phi \text{-} \mathrm{IID})$ for
$(\Phi \text{-} \mathrm{ID})$ if additionally the formula
$\varphi \in \Phi$ is {\em inflationary}, i.e., if
$(\forall Y \leq x) (\forall i < x) [Y(i) \rightarrow \varphi (i, Y)]$
holds.
\end{definition} 
For notational convention, we write 
$P^X_{\varphi, x} = Y$
to denote
$(\forall i < x)
 [P_{\varphi} (i, x, X) \leftrightarrow Y (i)]$.
By definition, 
$P_{\varphi, x}^X$ denotes the string consisting of the first $x$ bits of the
string obtained by $X$-fold iteration of the
operator defined by the formula $\varphi$
(starting with the empty string).

\begin{definition}
\normalfont
Let $\Phi$ be a class of $\LA{2}$-formulas.
\begin{enumerate}
\item $\Phi \text{-} \mathrm{ID} :=
       \mV{0} (\LID) + 
       (\Phi \text{-} \mathrm{ID})$.
\item $\Phi \text{-} \mathrm{IID} :=
       \mV{0} (\LID) + 
       (\Phi \text{-} \mathrm{IID})$.
\end{enumerate}
\end{definition}

%\noindent
By definition, the inclusion 
$\Phi \text{-} \mathrm{IID} \subseteq \Phi \text{-} \mathrm{ID}$
holds for any class $\Phi$ of $\LA2$-formulas.
It is important to note that $(\BSig{i} (\LID) \text{-COMP})$ is not allowed in 
$\mV{i} (\LID)$ for any $i \geq 0$, and hence
$\forall x \forall X (\exists Y \leq x) %(\forall i < x)
 P_{\varphi, x}^X = Y$
does not hold in $\mV{0} (\LID)$.
The main theorem in this paper is stated as follows.

\begin{theorem}
\label{t:main}
\begin{enumerate}
\item A function is polytime computable if and only if it is 
      $\BSig{1}$ $(\LID)$-definable in 
      $\BSig{0} \text{-} \mathrm{IID}$.
\item A function is polyspace computable if and only if it is 
      $\BSig{1} (\LID)$-definable in 
      $\BSig{0} \text{-} \mathrm{ID}$.
\end{enumerate}
\end{theorem}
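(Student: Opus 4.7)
The plan is to establish Theorem \ref{t:main} by splitting each equivalence into its two directions, mirroring the structure of Sections \ref{s:P}--\ref{s:ID}. For part (1), the ``polytime $\Rightarrow$ $\BSig{1}(\LID)$-definable in $\BSig{0}\text{-}\mathrm{IID}$'' direction formalises the computation of a polytime machine as an inflationary induction, and the converse is obtained by interpreting $\BSig{0}\text{-}\mathrm{IID}$ inside Zambella's $\mV{1}$, whose $\BSig{1}$-definable functions coincide with polytime by Proposition \ref{p:ptime}. Part (2) follows the same bifurcation with the inflationary restriction removed and Skelley's third-order $\mW{1}{1}$ playing the role of $\mV{1}$.

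For the forward direction of (1), I would fix a polytime machine $M$ computing $f(X)$ within time $p(|X|)$ and define a $\BSig{0}$ inflationary operator $\varphi(i,Y)$ which, reading $X$ as a parameter, interprets $Y$ as a partial computation history and appends the bits of the next configuration while retaining all previously written bits. Because the history has polynomial length, a suitable $x = x(|X|)$ bounds the configuration space, and the iterate $P_{\varphi,x}^{S^{p(|X|)}(\emptyset)}$ encodes the full accepting computation. The output is then extracted by a $\BSig{0}$-formula, and, using $(\BSig{0}\text{-}\mathrm{IID})$ to witness the existence of a stabilisation stage $U$, the resulting $\BSig{1}(\LID)$-definition of $f$ is provably correct in $\BSig{0}\text{-}\mathrm{IID}$. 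For the forward direction of (2), an entirely analogous construction records only the current configuration of a polyspace machine (polynomial in size but requiring up to $2^{p(|X|)}$ steps), so the operator is not inflationary; here $(\BSig{0}\text{-}\mathrm{ID})$ produces stages $U$ and $U+V$ with coinciding configurations within $x+1$ bits, which must belong to the halting loop and thereby expose the output.

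The converse directions proceed by a uniform translation. Given a $\BSig{0}$ formula $\varphi$, each $P_\varphi(i,x,X)$ is defined by a stage-by-stage recursion on $X$ of length $|X| \leq x$, hence its graph is $\BSig{1}$-expressible using a witness string that records every intermediate iterate. In $\mV{1}$ this witness is obtained by $\BSig{1}$-$\mathrm{COMP}$ followed by $\BSig{1}$-$\mathrm{IND}$ (Proposition \ref{p:comp_ind}) on $X$, so each clause of Definition \ref{d:P} and each instance of $(\BSig{0}\text{-}\mathrm{IID})$ becomes provable in $\mV{1}$ after replacing every $P_\varphi$-atom by its $\BSig{1}$-translation; Proposition \ref{p:ptime} then forces any $\BSig{1}(\LID)$-definable function to be polytime. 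For part (2) the iteration depth is $2^{|x|}$, so the witness must be a third-order object (a string of strings) that $\mW{1}{1}$ can provide via its third-order comprehension, and Skelley's polyspace characterisation of $\mW{1}{1}$ from \cite{skelley04} closes the argument.

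The main obstacle lies in the converse directions. Because $(\BSig{i}(\LID)\text{-}\mathrm{COMP})$ is explicitly absent from $\mV{0}(\LID)$, the string $P_{\varphi,x}^X$ is \emph{not} provably available in $\BSig{0}\text{-}\mathrm{IID}$ itself; the translation into $\mV{1}$ (or $\mW{1}{1}$) must therefore avoid circularity by instantiating comprehension only for the underlying $\LA{2}$-formula $\varphi$ and then verifying Definition \ref{d:P} externally. Checking that the induction needed to validate the stabilisation clause of $(\BSig{0}\text{-}\mathrm{IID})$ stays within $\BSig{1}$, and that the polyspace case lifts faithfully to Skelley's third-order framework with the exponential iteration handled by a single third-order comprehension, is the most delicate part of the argument.
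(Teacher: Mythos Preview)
Your overall architecture matches the paper's and the forward directions are essentially as in Sections~\ref{s:P} and~\ref{s:PSPACE}. The gap is in the converse directions, and it is not merely a matter of ``checking the induction stays within $\BSig{1}$''.

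You write that ``each $P_\varphi(i,x,X)$ is defined by a stage-by-stage recursion on $X$ of length $|X|\le x$, hence its graph is $\BSig{1}$-expressible using a witness string that records every intermediate iterate''. But the recursion in Definition~\ref{d:P} is by the \emph{string successor} $S$, so the number of iterates is the numerical value $\val(X)$, not the length $|X|$. When $|X|\le x+1$, as the axiom $(\BSig{0}\text{-}\mathrm{IID})$ allows, $\val(X)$ can be of order $2^{x}$, and a witness string listing all iterates would have length about $x\cdot 2^{x}$, far beyond any term bound. Accordingly the paper's Lemma~\ref{l:IID} only gives a $\BDel{1}$ definition of $P^{X}_{\varphi,x}$ under the extra hypothesis $|X|\le |y|$ (so that $\val(X)\le 2y+1$ is a first-order number). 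Your translation therefore does not even make sense for the $U,V$ that the axiom quantifies over, and the claim that ``each instance of $(\BSig{0}\text{-}\mathrm{IID})$ becomes provable in $\mV{1}$ after replacing every $P_\varphi$-atom'' has no content as stated.

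The missing idea is Theorem~\ref{t:IID}: one proves \emph{inside} $\mV{1}$, using the counting function $numones$ and $\BSig{1}$-induction, that an inflationary operator on a domain of size $x$ strictly increases $numones(x,P^{X}_{\varphi,x})$ until it stabilises, so stabilisation occurs with some $U\le\One(|x|)$. Only once the witness $U$ is forced to have logarithmic length does Lemma~\ref{l:IID} apply and the axiom become provable. The same issue recurs in part~(2): the iteration depth is $2^{x}$, not $2^{|x|}$, and a single application of $\mBSig{0}\text{-}3\mathrm{COMP}$ does not produce the exponentially long computation trace. The paper instead constructs the third-order witness by a divide-and-conquer induction on $|X|$ (Lemma~\ref{l:ID}), and then runs a third-order pigeonhole with a hyper-string analogue $\numones$ (Definition~\ref{d:numones}, Theorem~\ref{t:ID}) to force a repetition among the $2^{x}$ iterates. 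Your proposal flags these steps as ``delicate'' but does not supply them; they are precisely where the inflationary/non-inflationary distinction and the choice of target theory do real work.
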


\section{Defining $\mathrm P$ functions by inflationary inductive definitions}
\label{s:P}

\begin{theorem}
\label{t:polytime}
Every polytime function is 
$\BSig{1} (\LID)$-definable in 
$\BSig{0} \text{-} \mathrm{IID}$.
\end{theorem}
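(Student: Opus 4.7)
The plan is to show that every polytime function admits a $\BSig{1}(\LID)$ definition in $\BSig{0}\text{-IID}$ by directly simulating a polytime Turing machine through an inflationary inductive definition of its computation tableau. Fix a polytime function $f$ computed by a Turing machine $M$ within a time bound $p(|X|)$ on input $X$. I would work with a rectangular tableau of size $(p(|X|)+1)^2$ whose $(t,i)$\nobreakdash-cell records the tape symbol, tagged with the state and head marker when appropriate, at position $i$ of $M$'s tape at step $t$. Using the numerical pairing $\numseq{\cdot,\cdot}$, the statement ``cell $(t,i)$ carries symbol $s$'' is encoded by a single natural $\numseq{\numseq{t,i},s}$, so a partial tableau is simply a set of such encoded triples.

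Next I would introduce a $\BSig{0}$ operator $\varphi_M(n,Y)$ as the disjunction of three clauses: (i) $n \in Y$ already, which enforces inflation; (ii) $n$ encodes a cell in the initial row, determined from $X$ by a $\BSig{0}$ condition that depends on the bit $X(i)$; and (iii) $n$ encodes a cell $(t{+}1,i)$ with $t{+}1 \leq p(|X|)$ whose symbol is the one forced by $M$'s local transition rule applied to the three neighbouring cells $(t,i{-}1)$, $(t,i)$, $(t,i{+}1)$ that are already in $Y$. Since $M$'s transition table is finite, the transition check is a finite case analysis, keeping $\varphi_M$ inside $\BSig{0}$. Applying $(\BSig{0}\text{-IID})$ with a bound $x$ chosen polynomial in $|X|$ delivers $U,V \leq x{+}1$ with $V \neq \emptyset$ and $P_{\varphi_M}(i,x,U{+}V) \leftrightarrow P_{\varphi_M}(i,x,U)$ for all $i < x$. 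A $\BSig{0}$\nobreakdash-induction on the row index $t \leq p(|X|)$, which is available in $\mV{0}$ by Proposition \ref{p:comp_ind}, then shows that at such a stabilisation point every cell of the tableau up to time $p(|X|)$ must already be recorded in $P_{\varphi_M}(\cdot,x,U)$: were any row missing, a further iterate would add fresh triples, contradicting the fixed-point equation.

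The value $f(X) = Y$ is then read off the final row via a $\BSig{0}(\LID)$ bitwise extraction predicate, so the graph of $f$ is expressible by the $\BSig{1}(\LID)$ formula asserting the existence of a stabilising pair $U,V \leq x{+}1$ together with this bitwise condition on $P_{\varphi_M}(\cdot,x,U)$. The principal obstacle, which I expect to be the main technical point, is that $\mV{0}(\LID)$ forbids comprehension for $\LID$\nobreakdash-formulas, as emphasised just before Theorem \ref{t:main}, so one cannot simply collect the output bits of the fixed point into a string. I plan to circumvent this by arranging the encoding so that the output string is witnessed explicitly by an existentially quantified string of bounded length whose bit pattern is forced to match $P_{\varphi_M}$, with uniqueness of the witness following from the determinism of $M$ together with inflation, which guarantees that any two stabilisation points $U$ and $U'$ agree on the indices used for extraction.
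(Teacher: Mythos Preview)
Your plan follows the paper's strategy—encode the run of a polytime machine $M$ as the fixed point of an inflationary $\BSig{0}$ operator and read $f(X)$ off a stabilisation point supplied by $(\BSig{0}\text{-IID})$—but with a different encoding. The paper does not build a two-dimensional tableau of local cells; instead its operator appends one \emph{entire} configuration (of fixed length $q(|X|)$) to the end of the current string at each step, using $\BSig{0}$ predicates $\Init_M$, $\Next_M$ and a ``last $q(|X|)$ bits'' extractor $\Last$. With that encoding, stabilisation directly says that $\Next_M$ applied to the trailing $q(|X|)$-block yields nothing, so (since $\Next_M$ is arranged never to hold on a halting configuration) that block is the final configuration; $f(X)$ is then obtained via a $\BSig{0}$-definable $\Output$ function. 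No row-by-row argument is needed.

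This difference matters at the one step where your plan slips. You propose to show that at a stabilisation point every tableau row is filled by ``$\BSig{0}$-induction on the row index $t$ \dots\ available in $\mV{0}$ by Proposition~\ref{p:comp_ind}''. But the statement ``row $t$ is completely recorded in $P_{\varphi_M}(\cdot,x,U)$'' mentions the fixed-point predicate and is therefore a $\BSig{0}(\LID)$ formula, not a $\BSig{0}$ formula over $\LA{2}$; Proposition~\ref{p:comp_ind} supplies induction only for the latter, and the paper stresses just before Theorem~\ref{t:main} that $\LID$-comprehension (hence the usual route to $\LID$-induction) is unavailable in $\mV{0}(\LID)$. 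The paper's one-configuration-per-step encoding is chosen precisely so that the fixed-point analysis avoids this obstacle. Your remarks on the comprehension barrier to materialising $Y$, and on using inflation plus determinism for uniqueness, are on target and parallel the paper's (equally terse) treatment of those points.
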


\begin{proof}
Suppose that a function $f$ is polytime computable.
Assuming without loss of generality that $f$ is a unary function such that
$f(X)$ can be computed by a single-tape Turing machine $M$
 in a step bounded by a polynomial $p (|X|)$ in the binary length $|X|$
 of an input $X$.

We can assume that each configuration of $M$ on input $X$ is encoded into a
 binary string whose length is exactly $q(|X|)$ for some polynomial $q$.
The polynomial $q$ can be found from information on the polynomial $p$
 since $|f(X)| \leq p(|X|)$ holds.
Let the predicate $\Init_M$ denote the {\em initial} configuration of
 $M$ and $\Next_M$ the {\em next} configuration of $M$. 
More precisely, 
\begin{itemize}
\item $\Init_M (i, X)$ is true if and only if the $i$th bit of
the binary string that encodes the initial configuration of $M$ on input
 $X$ is $1$, and
\label{page:Init}
\item $\Next_M (i, X, Y)$ is true if and only if $Y$ encodes a
      configuration of $M$ on input $X$ and the $i$th bit of the binary string that encodes the successor configuration of $Y$ is $1$.
Note that $\Next_M (i, X, Y)$ never holds if $Y$ does not encode a
      configuration of $M$, or if $Y$ encodes the final configuration of $M$.
\end{itemize}
Careful readers will see that both $\Init$ and $\Next$ can be expressed
 by $\BSig{0}$-formulas.
We define $\Last (j, Y)$, the last $j$ bits of a string $Y$, which is
 also known as the {\em most significant part} of $Y$, by
\[
 \Last (j, Y) (i) \leftrightarrow
 i < j \wedge Y (|Y| \minus j + i).
 \]
Let $\varphi (i, X, Y)$ denote the formula
\[
 i < |Y| + q (|X|) \wedge  
 [Y(i) \vee \Init_M (i, X) \vee
  \Next_M (i \minus |Y|, X, \Last (q(|X|), Y))
 ].
\]
\label{page:phi}
Clearly $\varphi$ is a $\mathrm{\Sigma}^B_0$-formula.

Now reason in $\BSig{0} \text{-IID}$.
It is not difficult to see that
$\varphi (i, X, Y)$ is inflationary with respect to $Y$.
Hence, by the axiom 
$(\mathrm{\Sigma}^B_0 \text{-} \mathrm{IID})$
of $\BSig{0}$ inflationary inductive definitions, 
we can find two strings $U$ and $V$ such that 
$|U|, |V| \leq q(|X|) \cdot (p(|X|) +1)$,
$V \neq \emptyset$ and
$P^{U+V}_{\varphi, q(|X|) \cdot (p(|X|) +1)} = 
 P^{U}_{\varphi, q(|X|) \cdot (p(|X|) +1)}$.
Hence the following $\BSig{1} (\LID)$ formula $\psi_f (X, Y)$ holds.
\begin{equation*}
\begin{array}{rl}
(\exists U, V \leq q(|X|) \cdot (p(|X|) +1)) &
[V \neq \emptyset \wedge %U \neq V \wedge \\
%&
P^{U+V}_{\varphi, q(|X|) \cdot (p(|X|) +1)} = 
P^{U}_{\varphi, q(|X|) \cdot (p(|X|) +1)} \\
&
\wedge
Y = \Output 
(\Last (q (|X|), P^{U}_{\varphi, q(|X|) \cdot (p(|X|) +1)}))],
\end{array}
\end{equation*}
where $\Output (Z)$ denotes the function $\BSig{0}$-definable in 
$\mV{0}$ (depending on the underlying encoding) which extracts the value of the output from $Z$ if $Z$ encodes
 the final configuration of $M$.
By the definition of $\varphi$, 
$\Last (q (|X|), P^{U}_{\varphi, q(|X|) \cdot (p(|X|) +1)})$
encodes the final configuration of $M$,
since in any terminating computation the same configuration does not
 occur more than once.
Hence $\psi_f (X, Y)$ defines the graph $f(X) = Y$ of $f$.
It is easy to see that 
$\forall X \exists Y \psi_f (X, Y)$ also holds.
The uniqueness of $Y$ such that $\psi_{f} (X, Y)$ can be shown accordingly, allowing us to conclude.
\qed
\end{proof}

\section{Reducing inflationary inductive definitions to $\mV{1}$}
\label{s:IID}

In this section we show that every function
$\BSig{1} (\LID)$-definable in the system 
$\BSig{0} \text{-IID}$ of $\BSig{0}$ inflationary inductive definitions
is polytime computable by reducing 
$\BSig{0} \text{-IID}$ to the system $\mV{1}$.

\begin{definition}
A function $\val (x, X)$, which denotes the numerical value of the
 string consisting of
the last $x$ bits of a string $X$, is defined by
\begin{eqnarray*}
\val (x, \emptyset) &=& 0, \text{ or otherwise,} \\
\val (0, X) &=& 0, \\
\val (x+1, X) &=&
  \begin{cases}
  \val (x, X) & \text{if } |X| \leq x, \\
  2 \cdot \val (x, X) & 
  \text{if } x < |X| \ \& \ \neg X((|X| \minus 1) \minus x), \\
  2 \cdot \val (x, X) +1 & 
  \text{if } x < |X| \  \& \  X((|X| \minus 1) \minus x).
  \end{cases}
\end{eqnarray*}
\end{definition}

\begin{lemma}
\label{l:val}
The function $(x, X) \mapsto \val (x, X)$ is $\BDel{1}$-definable
 in $\mV{1}$ if $x \leq |y|$ for some $y$.
More precisely, the relation
$\val (x, X) = z$
can be expressed by a $\BDel{1}$ formula
$\psi_{\val} (x, y, z, X)$ if $x \leq |y|$,
and the sentence
$\forall y (\forall x \leq |y|) \forall X \exists ! z$ 
$\psi_{\val} (x, y, z, X)$
is provable in $\mV{1}$.
\end{lemma}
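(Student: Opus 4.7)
The plan is to represent the iterative computation of $\val(x,X)$ by a trace string $W$ encoding all intermediate values $\val(0,X),\val(1,X),\ldots,\val(x,X)$ as a bounded number sequence (so that $(W)^j$, in the sense recalled in the preliminaries, denotes $\val(j,X)$), and to exploit the hypothesis $x\leq |y|$ to keep $W$ polynomially sized. The key a priori estimate is $\val(x,X)<2^x$, immediate from the recurrence; combined with $x\leq |y|$ this gives $\val(x,X)<2^{|y|}\leq 2(y+1)$, so every entry of $W$ is a number polynomial in $y$, and the whole trace fits within some fixed $\LA{2}$-term $t(x,y)$ (of roughly the shape $(x+|y|)^c$, chosen generously to absorb the quadratic growth of the pairing function $\numseq{\cdot,\cdot}$ used inside $(W)^j$).

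Next I would write a $\BSig{0}$ formula $\mathit{Tr}(x,W,X)$ expressing that $(W)^0=0$ and that for every $j<x$ the entry $(W)^{j+1}$ equals the value prescribed by the three clauses in the definition of $\val$. Every ingredient needed here---numerical comparison, limited subtraction, halving, the limited exponential, and bit extraction---is $\BSig{0}$-definable in $\mV{0}$ by the facts recalled just before the lemma, so $\mathit{Tr}$ lies in $\BSig{0}$. I would then take $\psi_{\val}(x,y,z,X)$ to be the $\BSig{1}$ formula
\begin{equation*}
(\exists W\leq t(x,y))\,[\,\mathit{Tr}(x,W,X)\wedge (W)^x=z\,],
\end{equation*}
and observe that, modulo the provable existence of a trace, it is equivalent over $\mV{1}$ to the $\BPi{1}$ companion obtained by replacing $\exists W\leq t(x,y)$ by $\forall W\leq t(x,y)$ and $\wedge$ by $\rightarrow$; this equivalence is precisely what witnesses $\BDel{1}$-definability.

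The main step is therefore to prove in $\mV{1}$, under the hypothesis $x\leq|y|$, both existence $(\exists W\leq t(x,y))\,\mathit{Tr}(x,W,X)$ and uniqueness of $(W)^x$ among valid traces. Existence is a $\BSig{1}$ statement and I would establish it by $\BSig{1}$-induction on $x\leq|y|$, which is available in $\mV{1}$ by Proposition~\ref{p:comp_ind}: the base $x=0$ is the singleton trace with first entry $0$, and the induction step appends one further entry according to the recurrence, a $\BSig{0}$-definable operation on $W$ whose output still fits under the bound $t(x+1,y)$ since the new entry is below $2^{|y|}$. Uniqueness follows from a routine $\BSig{0}$-induction on $j\leq x$ that compares any two valid traces entrywise. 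The main obstacle I expect is purely bookkeeping: pinning down a concrete $\LA{2}$-term $t(x,y)$ and verifying that the extension step, with its pairing-code overhead, really stays within $t(x+1,y)$ throughout the induction; once $t$ is fixed with some slack, the remaining verifications are direct applications of the $\BSig{0}$-definable string apparatus recalled in the preliminaries.
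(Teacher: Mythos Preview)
Your proposal is correct and follows essentially the same route as the paper: encode the intermediate values $\val(0,X),\dots,\val(x,X)$ as a bounded number sequence in a trace string, bound its length via $\val(j,X)\le 2^{|y|}\le 2y+1$, take the $\BSig{1}$ formula asserting existence of such a trace with final entry $z$, prove existence by $\BSig{1}$-induction on $x$ in $\mV{1}$, prove uniqueness by a straightforward entrywise induction, and read off the $\BPi{1}$ equivalent from uniqueness. The only differences are cosmetic: the paper fixes the concrete bound $\numseq{x,2y+1}+1$ up front rather than leaving $t(x,y)$ abstract, and it records the base case $|X|=0$ (where $\val(x,\emptyset)=0$) explicitly, which you may want to add.
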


\begin{proof}
Let $\psi (x, z, X, Y)$ denote the formula expressing that 
$z =0$ if $|X| =0$, or otherwise 
%\item 
      $(Y)^0 = 0$,
%\item 
      $(Y)^x = z$, and for all $j < x$,
\begin{itemize}
%\item $(\forall j < x) 
\item   $|X| \leq j \rightarrow (Y)^{j+1} = (Y)^j$,
%        \wedge
\item   $j < |X| \wedge \neg X (|X| \minus j \minus 1) 
         \rightarrow (Y)^{j+1} = 2 (Y)^j$, and
%        \wedge 
\item   $j < |X| \wedge X (|X| \minus j \minus 1) 
         \rightarrow (Y)^{j+1} = 2 (Y)^j +1$.
\end{itemize}
Define $\psi_{\val} (x, y, z, X, Y)$ to be
$(\exists Y \leq \numseq{x, 2y+1} +1) \psi (x, z, X, Y)$.
Clearly $\psi_{\val}$ is a $\BSig{1}$ formula expressing the
 relation $\val (x, X) = z$ in case $x \leq |y|$. 
Note that $2^{|y|} \leq 2y+1$ for all $y$.
Hence if $x \leq |y|$, then 
$\val (x, X) \leq 2^x \leq 2^{|y|} \leq 2y+1$.
Reason in $\mV{1}$.
One can show that if $x \leq |y|$, then
$(\exists z \leq 2y+1)(\exists Y \leq \numseq{x, 2y+1} +1) 
 \psi (x, z, X, Y)$
holds by induction on $x$.
Accordingly the uniqueness of those $z$ and $Y$ above can be also shown.
From the uniqueness of $z$ and $Y$, 
$\val (x, X) = z$ is equivalent to a $\BPi{1}$ formula
$(\forall u \leq 2y+1)
 (\exists Y \leq \numseq{x, 2y+1} +1) 
 [\psi (x, y, u, X, Y)
  \rightarrow u=z
 ]$.
Hence $\psi_{\val}$ is a $\BDel{1}$ formula.
\qed
\end{proof}

\begin{lemma}
\label{l:IID}
Let $\varphi (x, X)$ be a $\BSig{0}$ formula.
Then the relation
$(x, X, Y) \mapsto P^X_{\varphi, x} =Y$
can be expressed by a $\BDel{1}$ formula 
$\psi_{\Pphi} (x, y, X, Y)$
if $|X| \leq |y|$.
More precisely, corresponding to Definition
\ref{d:P}.\ref{d:P:1} and \ref{d:P}.\ref{d:P:2},
$\psi_{\Pphi}$ enjoys the following.
\begin{enumerate}
%\item $(\forall X \leq x+1)
%       [|X| \leq |y| \rightarrow
%        \forall Y 
%        (\psi_{\Pphi} (x, y, X, Y) \rightarrow |Y| \leq x)
%       ]$.
\item $\psi_{\Pphi} (x, y, \emptyset, \emptyset)$.
\label{l:IID:1}
\item $(\forall X \leq x+1)
       (|X| \leq |y| \rightarrow
        \forall Y, Z
        [\psi_{\Pphi} (x, y, X, Y) \wedge
         \psi_{\Pphi} (x, y, S(X), Z) \rightarrow
         (\forall i < x)(Z(i) \leftrightarrow \varphi (i, Y)) 
        ]
       )$.
\label{l:IID:2}
\end{enumerate}
Furthermore, the sentence
$\forall x, y (\forall X \leq |y|) (\exists ! Y \leq x) 
 \psi_{\Pphi} (x, y, X, Y)$
is provable in $\mV{1}$.
\end{lemma}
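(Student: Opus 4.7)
The plan is to witness the relation $P^X_{\varphi,x} = Y$ by a single computation string $W$ whose successive components $W^{[0]}, W^{[1]}, \dots$ record the stages of the iteration, where $W^{[j]}$ is meant to hold the first $x$ bits of $P^{j}_{\varphi,x}$. Under the hypothesis $|X| \leq |y|$, Lemma \ref{l:val} provides a $\BDel{1}$-definable iteration count $n := \val(|y|, X) \leq 2y+1$. Hence such a $W$ needs at most $2y+2$ components of length $\leq x$, and the whole object is bounded by an explicit $\LA{2}$-term $t(x,y)$ through the $\BSig{0}$-definable encoding $W^{[j]}$ recalled from Cook--Nguyen.

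First I would define $\psi_{\Pphi}(x,y,X,Y)$ as the $\BSig{1}$ formula asserting the existence of $W \leq t(x,y)$ such that: $W^{[0]}$ agrees with $\emptyset$ on $[0,x)$; for every $j+1 \leq n$ and every $i < x$,
\begin{equation*}
W^{[j+1]}(i) \leftrightarrow \varphi^{*}(i, j, x, W),
\end{equation*}
where $\varphi^{*}$ is the syntactic result of replacing every atom $Y(k)$ in $\varphi(i,Y)$ with $W^{[j]}(k) \wedge k < x$ (matching Definition \ref{d:P}.\ref{d:P:2}); and finally $|Y| \leq x$ together with $(\forall i < x)(Y(i) \leftrightarrow W^{[n]}(i))$. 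Since $\varphi \in \BSig{0}$, the substitution preserves $\BSig{0}$; since $\val$ is $\BDel{1}$ with a bounded witness (Lemma \ref{l:val}), the quantifier over the witness can be absorbed so that the matrix is $\BSig{0}$, giving $\psi_{\Pphi} \in \BSig{1}$.

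Next, working in $\mV{1}$ under the assumption $|X| \leq |y|$, I would establish existence and uniqueness of $W$ by $\BSig{1}$ induction on the iteration count $n$, justified by Proposition \ref{p:comp_ind}. Existence of a stage-$(j{+}1)$ extension from a stage-$j$ prefix follows from $\BSig{0}$-comprehension already available in $\mV{0}$, since $\{\, i < x : \varphi^{*}(i,j,x,W)\,\}$ is the required new component; uniqueness is pointwise and propagates. Uniqueness also yields a $\BPi{1}$ equivalent of $\psi_{\Pphi}$, namely ``every $W \leq t(x,y)$ satisfying the initial and transition clauses has $W^{[n]}$ agreeing with $Y$ on $[0,x)$'', which together with the $\BSig{1}$ form shows $\psi_{\Pphi}$ is $\BDel{1}$ in $\mV{1}$. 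Clauses \ref{l:IID:1} and \ref{l:IID:2} are then immediate: clause \ref{l:IID:1} is the case $n=0$, where the unique witness is $W^{[0]} = \emptyset$; clause \ref{l:IID:2} follows from $\val(|y|, S(X)) = \val(|y|, X)+1$ (provable from the defining axiom of $S$ and the recursion for $\val$), so the transition clause at the top stage of the longer computation specialises to $Z(i) \leftrightarrow \varphi(i, Y)$ for $i < x$.

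The main obstacle I expect is the syntactic bookkeeping around $\varphi^{*}$: one must verify that the substitution remains $\BSig{0}$ uniformly (taking care when $\varphi$ already uses bounded number quantifiers sharing variable names with the bit-position $k$), and that it genuinely mirrors Definition \ref{d:P}.\ref{d:P:2} so that property \ref{l:IID:2} holds on the nose. In parallel, one must choose the length bound $t(x,y)$ as an honest $\LA{2}$-term that accommodates the numerical pairing used in $W^{[j]}$; this is an explicit but tedious calculation. Once these are pinned down, the remaining content is standard $\mV{1}$-provable induction on the length of the witness sequence.
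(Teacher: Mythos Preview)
Your proposal is correct and follows essentially the same route as the paper: encode the sequence of iterates $P^{0}_{\varphi,x},\dots,P^{\val(|y|,X)}_{\varphi,x}$ as the components of a single bounded string, state the transition clause via $\varphi$ applied to the previous component, and prove existence and uniqueness by induction on $\val(|y|,X)$ in $\mV{1}$, with $\BDel{1}$ falling out of uniqueness. The paper is terser (it writes $\varphi(i,Z^{[j]})$ rather than spelling out your $\varphi^{*}$ and does not explicitly verify clauses \ref{l:IID:1}--\ref{l:IID:2} or the identity $\val(|y|,S(X))=\val(|y|,X)+1$), but there is no substantive difference in strategy.
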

\begin{proof}%[of Lemma \ref{l:IID}]
Let $\psi (x, X, Y, Z)$ denote a formula which expresses that
%$Z$ encodes the sequence of strings
%$Z_0, \dots, Z_{z-1}$ such that
\begin{itemize}
%\item $ =  \val (|X|, 2^{|X|}, X)$,
\item $(\forall j \leq \val (|y|, X)) |(Z)^j| \leq x$,
\item $Z^{[0]} = \emptyset$, $Z^{[\val (|y|, X)]} = Y$, and
\item $(\forall j < \val (|y|, X))(\forall i < x)
       [Z^{[j+1]} (i) \leftrightarrow \varphi (i, Z^{[j]})]$.
\end{itemize} 
Define $\psi_{\Pphi} (x, y, X, Y)$ to be
$(\exists Z \leq \numseq{\val (|y|, X), x} +1)
 \psi (x, X, Y, Z)$.
Then, since $\varphi$ is a $\BSig{0}$ formula,
$\psi_{\Pphi}$ is a $\BSig{1}$
formula expressing the relation $P^X_{\varphi, x} = Y$ if 
$|X| \leq |y|$.
Reason in $\mV{1}$.
One can show 
$|X| \leq |y| \rightarrow 
 (\exists Y \leq x) \psi_{\Pphi} (x, y, X, Y, Z)$
by induction on $\val (|y|, X)$.
The uniqueness of such strings $Y$ and $Z$ can be also shown. 
Hence, as in the previous proof,
thanks to the uniqueness of $Y$ and $Z$,
$\psi_{\Pphi}$ is a $\BDel{1}$ formula.
\qed
\end{proof}

\begin{definition}
\label{d:pred}
\begin{enumerate}
\item A string function $\One (y)$, which denotes the string  consisting
      only of $1$ of length $y$, is defined by the axiom
$\One (y) (i) \leftrightarrow i < y$.
\item The {\em string predecessor} $P(X)$ is by the axiom
      \begin{equation*}
%      \begin{array}{rl}
      P(X)(i) \leftrightarrow i < |X| \wedge %&
      [ (X(i) \wedge (\exists j < i) X(j)) \vee
        (\neg X(i) \wedge  (\forall j < i) \neg X(j))
      ]. 
%      \end{array}
      \end{equation*}
\if0
\item (String subtraction) We define the {\em string subtraction}
      $X \minus Y$ by the axiom
      \begin{equation}  
      \begin{array}{rl}
      (X \minus Y) (i) \leftrightarrow &
      (X \leq Y \wedge i <0) \vee
      (Y < X \wedge
        [ (X(i) \wedge \neg Y(i)) \vee \\ 
      &
          (\exists k \leq i)
          ( (\neg X(k) \wedge Y(k)) \wedge
            (\forall j \leq i)
            (k < j \rightarrow \neg X(j) \vee Y(j))
          )
        ]
      )
      \end{array}
      \end{equation} 
      \fi
\end{enumerate}
\end{definition}

\begin{lemma}
\label{l:val_P}
\begin{enumerate}
\item In $\mV{0}$, if $0 < |X|$, then
%the following holds.
$%(\exists Y \leq |X|)
 S (P(X)) = X$ %\wedge
holds.
\label{l:val_P:0}
\item In $\mV{1}$, if $x < |y|$, then the following holds.
%      it holds that
\label{l:val_P:1}
\begin{equation}
      \val (|y|, S (\One (x))) = \val (|y|, \One (x)) +1.
\label{e:pred}
\end{equation}
\item In $\mV{1}$, if $0 < |X| \leq |y|$, then
$\val (|y|, P(X)) + 1 = \val (|y|, X)$
holds.
\label{l:val_P:2}
\end{enumerate}
\end{lemma}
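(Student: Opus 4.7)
The plan is to prove the three parts in order, with part 3 reducing via part 1 to a general increment lemma relating $\val$ and $S$. The conceptual backbone is that, when $|X| \leq |y|$, $\val(|y|, X)$ is the usual numerical value $\sum_{i < |X|} X(i)\cdot 2^i$ of $X$, while $S$ and $P$ implement binary successor and predecessor.

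For part 1, I would argue in $\mV{0}$ by bit-level case analysis: for each $i < |X|$, unfold the $\BSig{0}$ defining axioms of $S$ and $P$ and show $S(P(X))(i) \leftrightarrow X(i)$ by splitting on $X(i)$ together with the truth values of $(\exists j < i)X(j)$ and $(\exists j < i)\neg X(j)$. Since every predicate involved is $\BSig{0}$ and the case split is finitary, $\mV{0}$ suffices. The hypothesis $0 < |X|$ is used to handle the edge case $i = 0$ and to ensure that $P(X)$ and $S(P(X))$ have the expected lengths.

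For part 2, I would compute both sides directly by $\BSig{1}$-induction on the first argument of $\val$, using Lemma \ref{l:val} together with the $\BSig{1}$-induction available in $\mV{1}$ by Proposition \ref{p:comp_ind}. From $|\One(x)| = x$ and the fact that $\One(x)(i)$ holds iff $i < x$, one obtains $\val(|y|, \One(x)) = 2^x - 1$; applying the defining axiom of $S$ shows that $S(\One(x))$ has exactly bit $x$ set with length $x+1 \leq |y|$, whence $\val(|y|, S(\One(x))) = 2^x$. The quantities $2^x$ needed here are accessible through the $\BSig{0}$-definable limited exponential $Exp$.

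For part 3, part 1 rewrites the target as $\val(|y|, P(X)) + 1 = \val(|y|, S(P(X)))$, so it suffices to establish in $\mV{1}$ the general increment lemma: whenever $|S(Y)| \leq |y|$, $\val(|y|, S(Y)) = \val(|y|, Y) + 1$. Conceptually, this expresses the correctness of the binary successor under $\val$, since $S$ flips the lowest unset bit of $Y$ to $1$ while clearing the preceding run of $1$'s. I would prove it by $\BSig{1}$-induction on the position $k$ of the least $j$ with $\neg Y(j)$, treating the all-ones case $Y = \One(|Y|)$ separately (where the claim reduces to part 2). Substituting $Y = P(X)$ and invoking part 1 then gives part 3, with $|S(P(X))| = |X| \leq |y|$ verifying the hypothesis. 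The main obstacle is phrasing the induction invariant in genuinely $\BSig{1}$ form so that $\mV{1}$'s $\BSig{1}$-induction applies, while still tracking the bit-by-bit difference between $Y$ and $S(Y)$ and its precise effect on the $\val$ recursion.
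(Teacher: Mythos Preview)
Your proposal is correct, but your route through parts~2 and~3 diverges from the paper's in an instructive way. For part~2 the paper does a direct induction on $x$ using only the recursion for $\val$: from $\val(|y|,S(\One(x+1)))=2\cdot\val(|y|,S(\One(x)))$ and $\val(|y|,\One(x+1))=2\cdot\val(|y|,\One(x))+1$ the identity propagates without ever mentioning $2^x$ or $Exp$. For part~3 the paper does \emph{not} prove your general increment lemma $\val(|y|,S(Y))=\val(|y|,Y)+1$; instead it reuses the explicit description of $P(X)$ obtained in part~1. There one finds the least set bit $i_0$ of $X$ and shows $P(X)(j)\leftrightarrow(i_0<j\wedge X(j))\vee j<i_0$, so $P(X)$ and $X$ agree on all bits above $i_0$ while below and at $i_0$ they are exactly $\One(i_0)$ and $S(\One(i_0))$. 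The difference $\val(|y|,X)-\val(|y|,P(X))$ therefore reduces to the special case already handled in part~2. This explains why part~2 is stated only for $\One(x)$ rather than for arbitrary $Y$: it is tailored precisely to the shape that arises in part~3. Your approach is more modular and the general increment lemma is independently natural, but it requires an additional induction you must still formalise as $\BSig{1}$; the paper's approach is shorter and makes parts~1--3 a single thread tied together by the pivot $i_0$.
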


\begin{proof}
1. We reason in $\mV{1}$.
Suppose $0 < |X|$.
Then $X(i)$ holds for some $i < |X|$.
Since the axiom $(\BSig{i} \text{-MIN})$ of minimisation for 
$\BSig{i}$ formulas holds in $\mV{i}$, cf.
\cite[p. 98, Corollary V.1.8]{CN10},
there exists an element $i_0 < |X|$ such that
$X(i_0)$ and $(\forall j < i_0) \neg X(j)$ hold.
Define a string $Y$ with use of $(\BSig{0} \text{-COMP})$ by
\begin{equation}
 |Y| \leq |X| \quad \text{and} \quad
 (\forall i < |X|)
 [Y(i) \leftrightarrow 
  (i_0 < i \wedge X(i))  \vee i < i_0 
 ].
\label{e:Y}
\end{equation}
We show (i) $S(Y) = X$ and (ii) $P(X) = Y$.
It is not difficult to see $|S(Y)| = |X|$ and $|P(X)| = |Y|$.
For (i) suppose $i< |S(X)|$ and $S(X)(i)$.
If $Y(i)$ and $(\exists j < i) \neg Y(j)$ hold, then
$i_0 < i$ and $X(i)$ hold by the definition of $Y$.
If $\neg Y(i)$ and $(\forall j < i) Y(j)$ hold, then
$i=i_0$ holds.
By the choice of $i_0$, $X(i_0)$ and 
$(\forall j < i_0) \neg X(j)$, and hence 
$X(i)$ holds.
The converse inclusion can be shown in the same way.
For (ii) suppose $i < |P(X)|$ and $P(X)(i)$.
If $X(i)$  and $(\exists j < i) X(j)$ hold, then
$X(i)$ and $i_0 < i$ by the choice of $i_0$, and hence $Y(i)$.
If $\neg X(i)$ and $(\forall j < i) \neg X(j)$ hold, then
$i < i_0$, and hence $Y(i)$  holds.
The converse inclusion can be shown in the same way.

2. By Lemma \ref{l:val}, both
$\val (|y|, S(\One (x)))$ and $\val (|y|, \One (x))$ can be defined in $\mV{1}$.
We reason in $\mV{1}$.
Suppose $x \leq |y|$.
Then 
$|\val (x, \One (z))| + 1 \leq |\val (x, S(\One (z)))|$ 
$\leq x + 1 \leq |y|$.
We show that
(\ref{e:pred}) holds
by induction on $x$.
In case $x = 0$, $\One (x) = \emptyset$, and hence
$\val (|y|, S (\One (x))) = \val (|y|, S(\emptyset)) = 1 =
 \val (|y|, \emptyset) +1$.
For the induction step, assume by IH (Induction Hypothesis) that
(\ref{e:pred}) holds.
Then
$\val (|y|, S (\One (x+1))) = 2 \cdot \val (|y|, S (\One (x))) =
 2 \{ \val (|y|, \One (x)) +1 \} = (2 \cdot \val (|y|, \One (x)) +1) +1
 = \val (|y|, \One (x+1)) +1$.

3. We reason in $\mV{1}$.
Suppose $0 < |X| \leq |y|$.
Choose an element $i_0 < X$ as above and define a string $Y$ in the same way as (\ref{e:Y}).
Then $Y = P(X)$ as we showed above.
By the choice of $i_0$, for any $j < |X|$, if $i_0 < j$, then
 $X(j) \leftrightarrow Y(j)$ holds.
Hence it suffices to show that
$\val (|y|, \One (i_0)) + 1 = \val (|y|, S(\One (i_0)))$
holds, but this follows from Lemma \ref{l:val_P}.\ref{l:val_P:1}. 
\qed
\end{proof}

\begin{theorem}
\label{t:IID}
Let $\varphi \in \BSig{0}$.
In $\mV{1}$, if $\varphi$ is inflationary, then there exists a string
 $U$ such that $U \leq \One (|x|)$ and the following holds.
\begin{equation}
\label{e:IID}
%\begin{array}{rl}
 \forall Y, Z %& 
 [
 \psi_{\Pphi} (x, 2x, S(U), Y) \wedge
 \psi_{\Pphi} (x, 2x, U, Z) 
%\\ &
 \rightarrow 
 (\forall i < x) (Y(i) \leftrightarrow Z(i))
 ].
%\end{array}
\end{equation}
%the axiom $(\BSig{0} \text{-} \mathrm{IID})$ of 
%$\BSig{0}$ inflationary
% inductive definitions holds in $\mV{1}$.
%Hence $\BSig{0} \text{-} \mathrm{IID}$ can be regarded as a
% subsystem of $\mV{1}$.
\end{theorem}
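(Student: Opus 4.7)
The plan is to exploit monotonicity of inflationary iteration together with a pigeonhole argument on the $x$-element bit budget below position $x$. Since the sequence $P^{\emptyset}_{\varphi,x},\, P^{S(\emptyset)}_{\varphi,x},\, P^{S(S(\emptyset))}_{\varphi,x},\dots$ is non-decreasing on positions $i<x$ under the inflationary hypothesis, a fixed point restricted to the first $x$ bits must be reached within at most $x$ successor iterations, which is comfortably within the bound $\val(|2x|,\One(|x|)) = 2^{|x|}-1 \geq x$.

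I would first establish a monotonicity lemma in $\mV{1}$: for every string $U$ with $|U|\leq|2x|$, the unique strings $Z$ and $Y$ provided by $\psi_{\Pphi}(x,2x,U,Z)$ and $\psi_{\Pphi}(x,2x,S(U),Y)$ (uniqueness by Lemma \ref{l:IID}) satisfy $(\forall i<x)(Z(i)\to Y(i))$. This is immediate from Lemma \ref{l:IID}.\ref{l:IID:2}, which yields $Y(i)\leftrightarrow\varphi(i,Z)$, combined with the inflationary hypothesis $Z(i)\to\varphi(i,Z)$.

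Next I would argue by contradiction. Suppose (\ref{e:IID}) fails for every $U\leq\One(|x|)$. By monotonicity, each such $U$ admits some $i<x$ with $\neg Z(i)\wedge Y(i)$, so the number of ones in positions $<x$ strictly grows from $Z$ to $Y$. Using the standard $\BSig{0}$-definable counting function that returns the number of $i<x$ with $Z(i)$, I would prove by $\BSig{1}$-induction on $k$, available in $\mV{1}$ via Proposition \ref{p:comp_ind}, that for every $k\leq x+1$ there exists a string $U$ with $\val(|2x|,U)=k$ whose iterate $P^U_{\varphi,x}$ has at least $k$ ones in positions $<x$. Instantiating with $k=x+1$ yields more than $x$ ones in $x$ positions, the required contradiction. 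Hence some $U$ with $\val(|2x|,U)\leq x$ satisfies (\ref{e:IID}); since $x\leq 2^{|x|}-1=\val(|2x|,\One(|x|))$, such a $U$ is bounded by $\One(|x|)$ in the string ordering.

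The main obstacle I anticipate is the bookkeeping that converts the numerical induction on $k$ into a statement about the string $U$. Lemma \ref{l:val_P}.\ref{l:val_P:1} is the essential bridge, giving $\val(|2x|,S(U))=\val(|2x|,U)+1$ whenever $|U|<|2x|$, so that iterating $S$ from $\emptyset$ traces out every value $0,1,2,\dots$ up to the bound. Combined with the uniqueness clauses of Lemmas \ref{l:val} and \ref{l:IID}, and the $\BSig{0}$-definability of the counting function in $\mV{0}$, these ingredients let me translate the $\BSig{1}$-induction on the numerical iteration count back into the desired string witness $U\leq\One(|x|)$.
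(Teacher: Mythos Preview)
Your approach is essentially the paper's own: argue by contradiction, use the inflationary hypothesis to show that the number of ones below $x$ strictly increases at each step, and derive a pigeonhole contradiction by induction on the iteration count (the paper phrases the induction on $\val(|x|+1,X)$ rather than on a numerical $k$, but the content is identical). One correction: the counting function is \emph{not} $\BSig{0}$-definable in $\mV{0}$ (parity, hence counting, is provably outside $\mathrm{AC}^0$); the paper uses the $\BDel{1}$-definability of $numones$ in $\mV{1}$, and since this still keeps the induction hypothesis $\BSig{1}$, your argument goes through once you replace ``$\BSig{0}$-definable'' by ``$\BDel{1}$-definable in $\mV{1}$''.
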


\begin{proof}
Let us recall a numerical function $numones (x, X)$ which denotes the
 number of elements of $X$, or equivalently the number of $1$ occurring
 in the string $X$, not exceeding $x$
(See \cite[p. 149]{CN10}).
It can be shown that $numones$ is $\BSig{1}$-definable in $\mV{1}$ 
(See \cite[p. 149]{CN10}).
As we observed in the proof of Lemma \ref{l:val} or Lemma \ref{l:IID}, 
$numones$ is even $\BDel{1}$-definable in $\mV{1}$.

Let $\varphi \in \BSig{0}$.
Reason in $\mV{1}$.
Suppose that $\varphi$ is inflationary, i.e.,
$(\forall Y \leq x) (\forall j < x) [Y(i) \rightarrow \varphi (i, Y)]$ holds.
By contradiction we show the existence of a string $U$ such that 
$U \leq \One (|x|)$ and the condition (\ref{e:IID}) holds.
Since $|S(\One (|x|))| = |x|+1 = |2x|$, 
by Lemma \ref{l:IID}
$(\exists ! Y \leq x) P^X_{\varphi, x} = Y$ 
holds for any $X \leq S(\One (|x|))$.
Hence it suffices to find a string $U$ such that
$U \leq \One (|x|)$ and 
$P_{\varphi, x}^{S(U)} = P_{\varphi, x}^U$.
Assume that such a string $U$ does not exist.
Then for any $X \leq \One (|x|)$ there exists $i < x$ such that
$P^{S(X)}_{\varphi, x} (i)$ but
$\neg P^X_{\varphi, x} (i)$. 
This means that
$numones (x, P^X_{\varphi, x}) <
 numones (x, P^{S(X)}_{\varphi, x})$
holds for any $X \leq \One (|x|)$.
%Let $|X| = |x|$ and $|S(X)| = |x| +1$.

\begin{claim}
%(In $\mV{1}$) 
If $X \leq S(\One (|x|))$, then 
$\val (|x|+1, X) \leq numones (x, P^{X}_{\varphi, x})$
holds.
\end{claim}

%Both $\val$ and $numones$ are $\BDel{1}$-definable in $\mV{1}$,
%the claim is a $\mPi{B}{1}$ statement.
We show the claim by %$(\mPi{B}{1} \text{-IND})$, which is equivalent to 
%$(\BSig{1} \text{-IND})$ in $\mV{1}$, 
induction on $\val (|x|+1, X)$.
The base case that $\val (|x|+1, X) =0$ is clear.
For the induction step, consider the case $\val (|x|+1, X) >0$.
In this case, $0 < |X|$, and hence by Lemma \ref{l:val_P}.\ref{l:val_P:2}
%we can find a string $Y$ such that
%$|Y| \leq |X|$,
%$S(Y) = X$ and
$\val (|x| +1, P(X)) +1 = \val (|x|+1, X)$
holds.
Hence by IH 
$\val (|x|+1, P(X)) \leq numones (x, P^{P(X)}_{\varphi, x})$
holds.
By Lemma \ref{l:val_P}.\ref{l:val_P:0},
$S(P(X)) = X$ holds.
This together with IH yields 
$\val (|x|+1, X) = \val (|x|+1, P(X)) +1  \leq 
 numones (x, P^{X}_{\varphi, x})$
since 
$numones (x, P^{P(X)}_{\varphi, x}) <
 numones (x, P^{S(P(X))}_{\varphi, x}) =
 numones (x, P^{X}_{\varphi, x})$.

By the claim
$\val (|x|+1, S(\One (|x|))) \leq 
numones (x, P^{S(\One (|x|))}_{\varphi, x})$ 
holds.
On the other hand
$x < \val (|x| +1, S(\One (|x|)))$ since 
$|x| < |x|+1 = |S(\One (|x|))|$.
Therefore
$x <  numones (x, P^{S(\One (|x|))}_{\varphi, x})$
holds, but this contradicts the definition of $numones$.
\qed
\end{proof}

\begin{theorem}
\label{t:V}
Suppose $1 \leq i$.
If $\BSig{i} (\LID)$ formula $\psi$ is provable in 
$\BSig{0}  \mathrm{\text{-} IID}$, then
 there exists a $\BSig{i}$ formula $\psi'$ provable in $\mV{1}$ and
 provably equivalent to $\psi$ in $\mV{1} (\LID)$.
\end{theorem}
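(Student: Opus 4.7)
The plan is to define a structural translation $\psi \mapsto \psi^{\dagger}$ on $\LID$-formulas that replaces each atomic subformula $P_{\varphi}(i, x, X)$ with its $\BDel{1}$-definition $(\exists Y \leq x)[\psi_{\Pphi}(x, y, X, Y) \wedge Y(i)]$ supplied by Lemma \ref{l:IID}, with the parameter $y$ chosen so that $|y| \geq |X|$ at each occurrence; then to verify that $\psi' := \psi^{\dagger}$ is the required $\BSig{i}$-formula. Concretely, for the atoms arising from an instance of the IID-axiom and from Theorem \ref{t:IID}'s witnesses one can take $y = 2x$; for other atoms $y$ is synthesized as a polynomial $\LA{2}$-term from the bounds on $X$ inherited from the $\BSig{i}(\LID)$-structure of $\psi$.

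First I would check that $\psi^{\dagger} \in \BSig{i}$: each substitution introduces only a $\BDel{1}$-fragment, and since $i \geq 1$ these are absorbed into the outer $\BSig{i}$-structure of $\psi$ by standard prenexing. Next I would prove $\mV{1}(\LID) \vdash \psi \leftrightarrow \psi^{\dagger}$ by structural induction on $\psi$; the atomic case amounts to showing $P_{\varphi}(i, x, X) \leftrightarrow (\exists Y \leq x)[\psi_{\Pphi}(x, y, X, Y) \wedge Y(i)]$ under $|X| \leq |y|$, which follows from the defining axioms of $P_{\varphi}$ (Definition \ref{d:P}) combined with the unique-existence clause of Lemma \ref{l:IID}, via $\BSig{1}$-induction on the string $X$ available in $\mV{1}$ by Proposition \ref{p:comp_ind}.

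Finally I would show $\mV{1} \vdash \psi^{\dagger}$ by translating a $\BSig{0} \text{-IID}$ proof of $\psi$ axiom-by-axiom: the $\mV{0}$-axioms lift to $\mV{1}$ directly, and the translated defining axioms of $P_{\varphi}$ follow from the recursion built into $\psi_{\Pphi}$. The crucial case is the translated $(\BSig{0} \text{-IID})$-axiom, which is essentially the content of Theorem \ref{t:IID}: taking $U \leq \One(|x|)$ from Theorem \ref{t:IID} together with $V = S(\emptyset)$, one has $U + V = S(U)$ of length $\leq |x| + 1 = |2x|$, so Lemma \ref{l:IID} applies with $y = 2x$ and the concluding fixed-point equation is exactly the conclusion of Theorem \ref{t:IID}. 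The main obstacle is the side condition $|X| \leq |y|$ in Lemma \ref{l:IID}: the IID-axiom a priori existentially quantifies over $U, V \leq x + 1$, permitting $|U + V|$ up to $x + 2$, which would force $y$ to be exponential in $x$---unavailable in $\mV{1}$. This is overcome precisely by the short-witness form of Theorem \ref{t:IID}, which keeps the relevant $|U|$ within $|x|$ and hence allows the polynomial choice $y = 2x$.
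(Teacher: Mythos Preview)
Your proposal is correct and follows essentially the same approach as the paper. The paper's proof is terse---it states that the theorem follows by induction on the length of a $\BSig{0}\text{-IID}$-proof and details only the crucial case of the $(\BSig{0}\text{-IID})$ axiom, where it invokes Theorem~\ref{t:IID} to obtain short witnesses $U \leq \One(|x|)$ and $V = S(\emptyset)$, then applies Lemma~\ref{l:IID} with parameter $y = 2x$ to extract the unique $Y_0, Z_0$ and verify the fixed-point equation. Your translation $\psi \mapsto \psi^{\dagger}$ and axiom-by-axiom verification make this structure explicit, and your identification of the side-condition obstacle $|X| \leq |y|$ together with its resolution via the short-witness form of Theorem~\ref{t:IID} is exactly the content of the paper's argument.
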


\begin{proof}%[of Theorem \ref{t:V}]
The theorem can be shown by an induction argument on the length of a formal
$\BSig{0}  \mathrm{\text{-} IID}$-proof resulting in $\psi$.
We only discuss the axiom 
$(\BSig{0} \text{-IID})$ of $\BSig{0}$ inflationary inductive
 definitions and kindly refer details to readers.
%One can see that any instance of $(\BSig{0} \text{-IID})$
%is a $\BSig{2}$ formula (with a free variable $x$).
Let $\varphi$ a $\BSig{0}$ formula.
We reason in $\mV{1}$.
Fix a natural $x$ arbitrarily.
Then, since $S(X) = X + S(\emptyset)$, Theorem \ref{t:IID}
%together with Lemma \ref{l:IID} 
yields two strings $U$ and $V$ such that
$|U|, |V| \leq |x| \leq x+1$, $V = \emptyset$, %$|U+V| = |x|+1 = |2x|$, 
and the following hold.
\[
       \forall Y, Z 
       [
        \psi_{\Pphi} (x, 2x, U+V, Y) \wedge
        \psi_{\Pphi} (x, 2x, U, Z) 
        \rightarrow 
        (\forall i < x) (Y(i) \leftrightarrow Z(i))
       ].
\]
Since $|U|, |U+V| \leq |2x|$, Lemma \ref{l:IID} yields unique two
 strings
$Y_0$ and $Z_0$ such that $|Y_0|, |Z_0| \leq x+1$,
$\psi_{\Pphi} (x, 2x, U+V, Y_0)$
and 
$\psi_{\Pphi} (x, 2x, U, Z_0)$
hold.
Hence, by Lemma \ref{l:IID}.\ref{l:IID:2},
$Z_0( i) \leftrightarrow \varphi (i, Y_0)$
holds for any $i < x$.
This together with Lemma \ref{l:IID}.\ref{l:IID:1} allows us to conclude that
the statement
$%(\exists U, V \leq x +1)
% \left[
% V \neq \emptyset \wedge
 (i < x)
 \left(
 Z_0( i) \leftrightarrow \varphi (i, Y_0)
 \right)
% \right]
$
is provably equivalent to 
$(i < x)
\left(
 P_{\varphi} (i, x, U+V) \leftrightarrow P_{\varphi} (i, x, U)
 \right)$
in $\mV{1} (\LID)$.
\qed
\end{proof}

\begin{corollary}
\label{c:IID}
Every function $\BSig{1} (\LID)$-definable in 
$\BSig{0} \text{-} \mathrm{IID}$ is polytime computable.
\end{corollary}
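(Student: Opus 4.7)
The plan is to reduce the statement to Theorem \ref{t:V} combined with Proposition \ref{p:ptime}. Suppose $f$ is $\BSig{1}(\LID)$-definable in $\BSig{0}\text{-IID}$, witnessed by a formula $\varphi_f(\vec X, Y) \in \BSig{1}(\LID)$ such that $\BSig{0}\text{-IID} \vdash \forall \vec X\, \exists ! Y\, \varphi_f(\vec X, Y)$ and, under the standard interpretation, $\varphi_f$ expresses the graph $f(\vec X) = Y$. The goal is to produce a $\BSig{1}$ formula $\varphi'_f(\vec X, Y)$ that $\BSig{1}$-defines $f$ in $\mV{1}$, so that Proposition \ref{p:ptime} yields the conclusion.

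The main step is to extract from the proof of Theorem \ref{t:V} the underlying compositional translation on formulas: each occurrence of $P_\varphi(i, x, X)$ is replaced, by means of the $\BDel{1}$ formula $\psi_{\Pphi}$ supplied by Lemma \ref{l:IID}, with a $\BSig{1}$ assertion of the form ``there exists $Y$ with $\psi_{\Pphi}(x, y, X, Y) \wedge Y(i)$'' for a suitable witnessing bound $y$. This delivers a $\BSig{1}$ formula $\varphi'_f(\vec X, Y)$ satisfying $\mV{1}(\LID) \vdash \varphi_f \leftrightarrow \varphi'_f$. Applying Theorem \ref{t:V}, at the level needed to accommodate the expansion of $\exists!$, to the sentence $\forall \vec X\, \exists ! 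Y\, \varphi_f(\vec X, Y)$ then yields $\mV{1} \vdash \forall \vec X\, \exists ! Y\, \varphi'_f(\vec X, Y)$.

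It remains to check that $\varphi'_f$ still defines the graph of $f$ under the standard interpretation. This follows from the soundness of $\mV{1}(\LID)$ for the standard interpretation, together with Lemma \ref{l:IID}.\ref{l:IID:1}--\ref{l:IID:2}, which guarantees that $\psi_{\Pphi}$ faithfully expresses the iterate $P^X_{\varphi, x}$ under the intended semantics of $P_\varphi$. Hence $\varphi'_f$ $\BSig{1}$-defines $f$ in $\mV{1}$, and Proposition \ref{p:ptime} delivers the polytime computability of $f$.

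The only non-routine point is verifying that the formula-level translation sketched above coincides, up to $\mV{1}(\LID)$-provable equivalence, with the proof-level translation used inside Theorem \ref{t:V}, so that the replacement of $\varphi_f$ by $\varphi'_f$ is legitimate both for preserving the graph and for inheriting the provable existence-and-uniqueness statement. This is a structural bookkeeping task; the substantive content -- reducing $\BSig{0}$-inflationary inductive definitions to $\mV{1}$-definable objects -- has already been discharged by Lemmas \ref{l:val} and \ref{l:IID} and by Theorems \ref{t:IID} and \ref{t:V}.
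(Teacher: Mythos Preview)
Your proposal is correct and follows essentially the same route as the paper: apply Theorem~\ref{t:V} to pass from the $\BSig{1}(\LID)$ defining formula to an equivalent $\BSig{1}$ formula provable in $\mV{1}$, then invoke Proposition~\ref{p:ptime}. Your write-up is in fact more explicit than the paper's, which treats the translation as a black box; you spell out that the translation comes from Lemma~\ref{l:IID}, and you flag the need to handle the $\exists!$ quantifier at an appropriate level~$i$, a point the paper leaves implicit.
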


\begin{proof}
Suppose that a $\BSig{1} (\LID)$ sentence $\psi$ is provable in 
$\BSig{0} \text{-} \mathrm{ID}$.
Then by Theorem \ref{t:V} we can find a $\BSig{1}$ sentence
$\psi'$ provable in $\mV{1}$ and provably equivalent to $\psi$ in 
$\mV{1} (\LID)$.
In particular $\psi$ and $\psi'$ are equivalent under the underlying
 standard interpretation.
Hence every function $\BSig{1} (\LID)$-definable in 
$\BSig{0} \text{-} \mathrm{IID}$ 
is $\BSig{1}$-definable in $\mV{1}$. 
Now employing %a  well known fact that every string function $\BSig{1}$ definable in $\mV{1}$ is polytime computable 
%(cf. \cite[p. 148, Lemma VI.4.5]{CN10}) 
Proposition \ref{p:ptime} enables us
 to conclude.
\qed
\end{proof}

\begin{corollary}
\label{c:P}
A predicate belongs to $\mathrm P$ if and only if it is
$\BDel{1} (\LID)$-definable in
$\BSig{0} \text{-} \mathrm{IID}$.
\end{corollary}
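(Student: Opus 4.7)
The plan is to reduce both directions to the function-level results by moving between a predicate $R$ and its $\{\emptyset, S(\emptyset)\}$-valued characteristic function, say $f_R$, with $f_R(X) = S(\emptyset)$ if $R(X)$ holds and $f_R(X) = \emptyset$ otherwise. Under this correspondence $R \in \mathrm{P}$ holds iff $f_R$ is polytime computable, so the corollary will follow by combining Theorem \ref{t:polytime} with Corollary \ref{c:IID}.

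For the forward direction, assume $R \in \mathrm{P}$, so that $f_R$ is polytime. By Theorem \ref{t:polytime}, $f_R$ is defined by some $\psi_R(X, Y) \in \BSig{1}(\LID)$ with $\forall X \exists ! Y\, \psi_R(X, Y)$ provable in $\BSig{0}\text{-}\mathrm{IID}$. Since $R(X)$ amounts to $f_R(X) \neq \emptyset$, one sees that inside $\BSig{0}\text{-}\mathrm{IID}$ the predicate $R$ is provably equivalent both to the $\BSig{1}(\LID)$ formula $\exists Y (\psi_R(X, Y) \wedge Y \neq \emptyset)$ and, using the provable uniqueness of $Y$, to the $\BPi{1}(\LID)$ formula $\forall Y (\psi_R(X, Y) \rightarrow Y \neq \emptyset)$. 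This exhibits $R$ as $\BDel{1}(\LID)$-definable in $\BSig{0}\text{-}\mathrm{IID}$.

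For the converse, assume $R$ is $\BDel{1}(\LID)$-definable, witnessed by $\psi \in \BSig{1}(\LID)$ and $\psi' \in \BPi{1}(\LID)$ with $\BSig{0}\text{-}\mathrm{IID} \vdash \forall X (\psi(X) \leftrightarrow \psi'(X))$. Pushing the negation through $\psi'$ yields a $\BSig{1}(\LID)$ formula $\chi(X)$ prenex-equivalent to $\neg \psi'(X)$. Then the formula
\[
\varphi_R(X, Y) \;\equiv\; (Y = S(\emptyset) \wedge \psi(X)) \vee (Y = \emptyset \wedge \chi(X))
\]
lies in $\BSig{1}(\LID)$ and defines the graph of $f_R$, with totality and uniqueness of $Y$ provable in $\BSig{0}\text{-}\mathrm{IID}$ from the provable equivalence of $\psi$ and $\psi'$ together with classical logic. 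Applying Corollary \ref{c:IID} to this $\BSig{1}(\LID)$-definition of $f_R$ yields that $f_R$ is polytime computable, hence $R \in \mathrm{P}$.

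There is no real obstacle here: once Theorem \ref{t:polytime} and Corollary \ref{c:IID} are in place, the corollary is a routine predicate-to-function repackaging. The only point worth care is that $\varphi_R$ actually sits syntactically in $\BSig{1}(\LID)$, which relies on the standard fact that the prenex negation of a $\BPi{1}(\LID)$ formula is $\BSig{1}(\LID)$, and on the closure of $\BSig{1}(\LID)$ under conjunction with $\BSig{0}$ formulas and under disjunction.
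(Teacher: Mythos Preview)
Your argument is correct and is precisely the routine predicate-to-function reduction that the paper has in mind: the paper states Corollary~\ref{c:P} without proof, treating it as an immediate consequence of Theorem~\ref{t:polytime} and Corollary~\ref{c:IID}, and your write-up simply spells out that standard passage via the characteristic function.
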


\section{Defining $\mathrm{PSPACE}$ functions by non-inflationary inductive
 definitions}
\label{s:PSPACE}

\begin{theorem}
Every polyspace computable function is 
$\mathrm{\Sigma}^B_1 (\LID)$-definable in \\
$\mathrm{\Sigma}^B_0 \text{-} \mathrm{ID}$.
\end{theorem}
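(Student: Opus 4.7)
The plan is to mirror the proof of Theorem~\ref{t:polytime}, replacing the inflationary accumulation of all visited configurations by a non-inflationary overwriting operator that simulates $M$ one step at a time. Fix a polyspace function $f$ computed by a deterministic single-tape Turing machine $M$ whose configurations on input $X$ are encoded as binary strings of length exactly $q(|X|)$, for a polynomial $q$ chosen so that the total number of distinct configurations of $M$ on $X$ is at most $2^{q(|X|)}$; in particular $M$ halts on every $X$ within $2^{q(|X|)}$ steps, and the sequence of configurations visited before halting is pairwise distinct. Let $\Init_M$ and $\Next_M$ be the $\BSig{0}$-predicates defined as on page~\pageref{page:Init}, and let $\mathrm{Halt}_M(Y)$ be an additional $\BSig{0}$-predicate expressing that $Y$ encodes a halting configuration. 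Define the $\BSig{0}$-formula
\begin{equation*}
 \varphi(i,X,Y) \equiv i < q(|X|) \wedge \bigl[(|Y|=0 \wedge \Init_M(i,X)) \vee (|Y|>0 \wedge \mathrm{Halt}_M(Y) \wedge Y(i)) \vee (|Y|>0 \wedge \neg \mathrm{Halt}_M(Y) \wedge \Next_M(i,X,Y))\bigr].
\end{equation*}
Under this definition, $P^{\emptyset}_{\varphi,q(|X|)} = \emptyset$ while for any string $U$ with numerical value $n \geq 1$, $P^U_{\varphi,q(|X|)}$ encodes the configuration of $M$ on input $X$ after $n-1$ computation steps, stabilising at the halting configuration once reached.

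Next, reason in $\BSig{0}\text{-ID}$ and apply the axiom $(\BSig{0}\text{-ID})$ with parameter $x := q(|X|)$ to obtain strings $U$ and $V$ with $|U|,|V| \leq q(|X|)+1$, $V \neq \emptyset$, and $P^{U+V}_{\varphi,q(|X|)} = P^U_{\varphi,q(|X|)}$. The crucial observation is that the length bound $|U|,|V| \leq q(|X|)+1$ allows the numerical values of $U,V$ to range up to $2^{q(|X|)+1}$, comfortably exceeding the halting bound $2^{q(|X|)}$. Since the configurations of $M$ cannot repeat before halting, the only way $P^{U+V}_{\varphi,q(|X|)} = P^U_{\varphi,q(|X|)}$ with $V \neq \emptyset$ is for $P^U_{\varphi,q(|X|)}$ already to encode the halting configuration of $M$ on input $X$. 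Consequently the $\BSig{1}(\LID)$-formula
\begin{equation*}
 \psi_f(X,Y) \equiv (\exists U,V \leq q(|X|)+1)\bigl[V \neq \emptyset \wedge P^{U+V}_{\varphi,q(|X|)} = P^U_{\varphi,q(|X|)} \wedge Y = \Output(P^U_{\varphi,q(|X|)})\bigr]
\end{equation*}
defines the graph of $f$, where $\Output$ is the $\BSig{0}$-definable output-extractor already invoked in the polytime proof; in contrast with that proof no $\Last$-projection is needed, because $P^U_{\varphi,q(|X|)}$ itself already is the (halting) configuration.

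The main step requiring care is the \emph{provable} existence and uniqueness of $Y$ in $\BSig{0}\text{-ID}$, rather than a mere semantic verification. Existence is immediate from the ID axiom together with the stabilising clause of $\varphi$, which makes the halting configuration a fixed point of the iteration. Uniqueness reduces, exactly as in Theorem~\ref{t:polytime}, to showing that any two witness pairs $(U,V)$ yield the same string $P^U_{\varphi,q(|X|)}$; this in turn follows from the non-repetition of configurations of the deterministic machine $M$ prior to halting, so that every admissible $U$ must index a step at or after the halting step, and all such indices produce the same configuration by the stabilising clause of $\varphi$.
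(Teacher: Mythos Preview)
Your proof is correct and follows essentially the same route as the paper's: define a $\BSig{0}$ operator that overwrites the current configuration by its successor and stabilises at the halting configuration, invoke the ID axiom with bound $q(|X|)$, and extract the output from the resulting fixed configuration. The paper packages your halting clause into a modified transition predicate $\Next'_M$ rather than an explicit case split on $\mathrm{Halt}_M$, and writes its $\varphi$ more tersely, but the operator, the defining $\BSig{1}(\LID)$ formula $\psi_f$, and the informal treatment of provable uniqueness via non-repetition of configurations are the same in substance.
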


\begin{proof}
The theorem can be shown in a similar manner as Theorem
 \ref{t:polytime}.
Suppose that a function $f$ is polyspace computable. 
As in the proof of Theorem \ref{t:polytime} we can assume that $f$ is a
 unary function such that $f(X)$ can be computed by a single-tape Turing
 machine $M$ using a number of cells bounded by a polynomial 
$p(|X|)$ in $|X|$.
Assuming a standard encoding of configurations of $M$ into binary
 strings, the binary length of every configuration is exactly $q(|X|)$
 for some polynomial $q$.
Let $\Init_M$ denote the predicate defined on page \pageref{page:Init}.
A new predicate $\Next'_M (i, X, Y)$ denotes the successor configuration of $Y$, but in contrast to $\Next_M$,
$\Next'_M (i, X, Y)$ does not change if $Y$ encodes the final
 configuration.
More precisely, if $Y$ encodes the final configuration, then
$(\forall i < q(|X|)) (\Next'_M (i, X, Y) \leftrightarrow Y(i))$
holds.
In contrast to the definition of $\varphi$ on page \pageref{page:phi},
let $\varphi (i, X, Y)$ denote the formula
\[
 i < q(|X|) \wedge 
 [\Init_M (i, X) \vee \Next' (i, X, Y)].
\]
It is not difficult to convince ourselves that $\varphi$ is a $\BSig{0}$ formula.
Hence, reasoning in $\BSig{0} \text{-ID}$,
by the axiom 
$(\mathrm{\Sigma}^B_0 \text{-} \mathrm{ID})$
of $\BSig{0}$ inductive definitions, 
we can find two strings $U$ and $V$ such that 
$|U|, |V| \leq q(|X|) +1$,
$V \neq \emptyset$ and
$P^{U+V}_{\varphi, q(|X|) +1} = 
 P^{U}_{\varphi, q(|X|) +1}$ hold.
Hence the following $\BSig{1} (\LID)$ formula $\psi_f (X, Y)$ holds.
\begin{equation*}
\begin{array}{rl}
(\exists U, V \leq q(|X|) +1) &
[V \neq \emptyset \wedge %U \neq V \wedge %\\
%&
P^{U+V}_{\varphi, q(|X|) +1} = 
P^{U}_{\varphi, q(|X|) +1} \wedge \\
&
Y = \Output (P^{U}_{\varphi, q(|X|) +1})
],
\end{array}
\end{equation*}
where $\Output (Z)$ denotes the extraction function $\BSig{0}$-definable in 
$\mV{0}$ as in the proof of Theorem \ref{t:polytime}.
As we observed, 
$P^{U}_{\varphi, q(|X|) +1}$
encodes the final configuration of $M$.
Hence $\psi_f (X, Y)$ defines the graph $f(X) = Y$ of $f$.
Now it is clear that
$\forall X \exists Y \psi_f (X, Y)$ holds.
The uniqueness of $Y$ follows accordingly, allowing us to conclude.
\qed
\end{proof}

%\comments{Checked until here}
\section{Reducing non-inflationary inductive definitions to $\mW{1}{1}$}
\label{s:ID}

In this section we show that every function
$\BSig{1} (\LID)$-definable in the system 
$\BSig{0} \text{-ID}$ of $\BSig{0}$ inductive definitions
is polyspace computable by reducing 
$\BSig{0} \text{-ID}$ to 
a third order system $\mW{1}{1}$ of bounded arithmetic which was
introduced by A. Skelley in \cite{skelley04}.
The third order vocabulary $\LA{3}$ is defined augmenting the second order vocabulary $\mathcal L^2_A$ with the third
order membership relation $\in_3$.
As in the case of the second order membership, the formula of the form
$Y \in_3 \mathcal X$ is abbreviated as $\mathcal X (Y)$.
Third order elements
$\mathcal X, \mathcal Y, \mathcal Z, \dots$
would denote {\em hyper} strings, i.e., $\mathcal X (Y)$ holds if and only if
the $Y$th bit of $\mathcal X$ is $1$.
Classes $\mBSig{i}$, $\mBPi{i}$ and
$\mBDel{i}$ ($0 \leq i$) are defined in the same
manner as $\BSig{i}$,  $\BPi{i}$ and $\BDel{i}$ but third order quantifiers are taken into
account instead of second order ones.
For instance,
$\mBSig{0} = \bigcup_{0 \leq i}
 \BSig{i} (\mathcal L^3_A)$,
and a $\mBSig{1}$ formula is of the form 
$\exists \mathcal X \psi (\mathcal X)$, 
where no third order quantifier appears in $\psi$.
\if0
A {\em strict} $\mBSig{0}$ consists of a third order
existential quantifier followed by a formula with no third order
quantifiers.
The class of formulas consisting of a single second order universal
quantifier followed by a strict $\mBSig{1}$ formula is
denoted by $\forall^2 \mBSig{1}$.
\fi
For a class $\Phi$ of $\LA{3}$-formulas, the axiom of 
$(\mathrm{\Phi} \text{-} 3 \mathrm{COMP})$ is defined by
\begin{equation}
\tag{$\mathrm{\Phi} \text{-} 3 \mathrm{COMP}$} 
\forall x \exists \mathcal Z (\forall Y \leq x)
[\mathcal Z (Y) \leftrightarrow \varphi (Y)],
\label{p:3comp}
\end{equation}
where $\varphi \in \mathrm{\Phi}$.
The system $\mW{1}{1}$ consists of the basic axioms of second order bounded
arithmetic (B1--B12, L1, L2 and SE, \cite[p. 96]{CN10}), 
$(\mBSig{1} \text{-} \mathrm{IND})$,
$(\mBSig{0} \text{-} \mathrm{COMP})$
and
$\mBSig{0} \text{-} 3 \mathrm{COMP}$.

%$\forall^2 \mBSig{1} \text{-} \mathrm{IND}$

\begin{proposition}[Skelley \cite{skelley04}]
\label{p:pspace}
A function is polyspace computable if and only if it is 
$\mBSig{1}$-definable in $\mW{1}{1}$.
\end{proposition}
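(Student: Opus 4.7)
The plan is to establish both directions by lifting, one order higher, the argument used in Proposition \ref{p:ptime} for $\mV{1}$ and $\mathrm{P}$: exponentially long computations are captured by third-order hyper-string objects in place of polynomial-length strings. For the easy direction, that every polyspace function is $\mBSig{1}$-definable in $\mW{1}{1}$, let $M$ be a single-tape polyspace Turing machine whose configurations on input $X$ are encoded by binary strings of length exactly $q(|X|)$, so that $M$ runs for at most $2^{q(|X|)}$ steps before halting or cycling. The one-step successor relation $\Next_M (Y, Z)$ (as on page \pageref{page:Init}) is $\BSig{0}$-definable, so $\mBSig{0} \text{-} 3 \mathrm{COMP}$ produces a hyper-string $\mathcal R_0$ with $\mathcal R_0 (\langle Y, Z \rangle) \leftrightarrow \Next_M (Y, Z)$. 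Repeated squaring then yields, via $\mBSig{1} \text{-IND}$ on $k \leq q(|X|)$, a sequence $\mathcal R_0, \mathcal R_1, \dots, \mathcal R_{q(|X|)}$ in which $\mathcal R_{k+1}$ codes the $2^{k+1}$-step transition relation and is obtained from $\mathcal R_k$ by applying $\mBSig{0} \text{-} 3 \mathrm{COMP}$ to the composition formula $(\exists W \leq q(|X|))[\mathcal R_k (\langle Y, W \rangle) \wedge \mathcal R_k (\langle W, Z \rangle)]$, which lies in $\mBSig{0}$. Applying $\mathcal R_{q(|X|)}$ to the initial configuration of $M$ on $X$ and extracting the output from the resulting final configuration (both $\BSig{0}$ operations) then gives a $\mBSig{1}$ definition of $f$, with totality and uniqueness provable by the same induction.

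For the converse direction I would establish a witnessing theorem: if $\mW{1}{1} \vdash \exists \mathcal X\, \varphi (\vec X, \mathcal X)$ with $\varphi$ third-order quantifier free, then there is a polyspace procedure that, given $\vec X$ and a query $Y$, decides whether $\mathcal X (Y)$ holds. The argument proceeds by induction on a suitably normalised (free-cut-eliminated) $\mW{1}{1}$-derivation, maintaining the invariant that every $\mBSig{1}$-quantifier occurring in the current sequent carries a polyspace witness. The logical and basic axioms together with $\mBSig{0} \text{-COMP}$ give polynomial-time and hence polyspace witnesses, while $\mBSig{0} \text{-} 3 \mathrm{COMP}$ yields a hyper-string whose bits are $\mBSig{0}$-decidable and therefore again polyspace-computable.

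The main obstacle is the induction axiom $\mBSig{1} \text{-IND}$, whose induction formula contains a third-order existential: a naive unwinding of $n$ induction steps would compose $n$ hyper-string witnesses and exhaust the available space. The standard remedy, analogous to the binary search witnessing used for Buss' $\mathrm U^1_2$, is divide-and-conquer: when reconstructing a witness across an induction segment of length $2^k$, recurse on the midpoint, recomputing sub-witnesses on demand via a recursion of depth $k = O(\log n)$. This keeps the live hyper-string data polynomially bounded at any one moment, so each queried bit of the final witness is produced in polyspace. Combined with the forward direction, this establishes the characterisation.
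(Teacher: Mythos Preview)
The paper does not supply its own proof of this proposition: it is quoted verbatim as a result of Skelley \cite{skelley04} and used as a black box (just as Proposition~\ref{p:ptime} is quoted from Zambella). So there is no argument in the paper to compare your sketch against.

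That said, your outline is a faithful summary of how this theorem is actually established. The forward direction by repeated squaring of the one-step transition relation under $\mBSig{0}\text{-}3\mathrm{COMP}$, driven by $\mBSig{1}\text{-IND}$ on the exponent, is exactly the standard construction. For the converse, the witnessing argument by cut elimination with a divide-and-conquer treatment of the induction rule is precisely what Skelley does (and, as you note, is the third-order analogue of the $\mathrm U^1_2$ witnessing). One small point to tighten: in the forward direction you should make explicit that the existential over the sequence $\mathcal R_0,\dots,\mathcal R_{q(|X|)}$ can be packaged into a single third-order existential (via a hyper-string pairing), so that the resulting graph formula really is $\mBSig{1}$ rather than a conjunction of $q(|X|)$ many third-order existentials; this is routine but worth stating.
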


\begin{remark}
In the original definition of $\mW{1}{1}$ presented in \cite{skelley04},
the axiom $(\mathrm{IND})$ of induction is allowed only for a class 
$\forall^2 \mBSig{1}$ of formulas, which is 
slightly more restrictive than $\mBSig{1}$.
However it can be shown that every $\mBSig{1}$ formula is
 provably equivalent to a $\forall^2 \mBSig{1}$ formula in 
$\mW{1}{1}$ (See \cite[Theorem 2 and Cororally 3]{skelley04}).
\end{remark}

We show that a stronger form of $\BSig{0}$ inductive
definitions holds in  
$\mW{1}{1}$.

\begin{definition}[Axiom of Relativised Inductive Definitions]
\label{d:RID}
\normalfont
We assume a new predicate symbol $P_{\varphi} (i, x, X, Y)$ instead of 
$P_{\varphi} (i, x, X)$ for each $\varphi$.
We replace Definition \ref{d:P}.\ref{d:P:1} and \ref{d:P}.\ref{d:P:2}
 respectively with the following defining axioms.
\begin{enumerate}
\item $(\forall i < x) [ \Pphi (i, x, \emptyset, Y)
                          \leftrightarrow Y (i)
                       ]$.
\label{d:RID:1}
\item $(\forall X \leq x+1) 
       (\forall i < x)
       \left[
        \Pphi (i, x, S(X), Y) \leftrightarrow 
          \varphi (i, P_{\varphi, x}^X [Y])
       \right]$,
%\\
       where $\varphi (i, P_{\varphi, x}^X [Y])$ denotes the result of
      replacing every occurrence of $X(j)$ in $\varphi (i, X)$ with 
      $\Pphi (j, x, X, Y) \wedge j < x$.
\label{d:RID:2}
\end{enumerate} 
Then a relativised form of the axiom of inductive definitions 
denotes the following statement,
where $\varphi \in \Phi$. 
\begin{equation*}
 (\forall Y \leq x)
 (\exists U, V \leq x+1)
 \left[
 V \neq 0 \wedge 
 (\forall i < x) 
 \left(
 \Pphi (i, x, U+V, Y) \leftrightarrow \Pphi (i, x, U, Y)
 \right)
 \right]
\end{equation*}
\end{definition} 
As in the case of the predicate $\Pphi (i, x, X)$,
we write $P_{\varphi, x}^X [Y] = Z$ instead of
$(\forall i < x) (\Pphi (i, x, X, Y) \leftrightarrow Z(i))$.
Apparently the axiom of relativised inductive definitions implies the
original axiom of inductive definitions.

\begin{definition}
\begin{enumerate}
\item The {\em complementary string} $Y^{\mathrm C}_x$ of a string $Y$ of
 length $x$ is defined by the axiom
 $Y^{\mathrm C}_x (i) \leftrightarrow
  i < x \wedge \neg Y(i)$.
\item The {\em string subtraction} $X \minus Y$ is defined by the axiom
\[
 (X \minus Y) (i) \leftrightarrow
 (X \leq Y \wedge i< 0) \vee
 (Y < X \wedge i < |X| \wedge (X + S(Y^{\mathrm C}_{|X|})) (i)
 ).
\]
\end{enumerate}
\end{definition}

It can be shown that in $\mV{0}$, if $|Y| \leq x$, then
$Y + Y^{\mathrm C}_x = \One (x)$,
and hence
$Y + S(Y^{\mathrm C}_x) = S ( \One (x))$
holds.
Thus one can show that 
$|(X + Y) \minus Y| = |X|$
and, for any $i < |X|$,
$[(X + Y) \minus Y] (i) \leftrightarrow
 [X + S(\One (|X+Y|))] (i) \leftrightarrow X (i)$,
concluding $(X + Y) \minus Y = X$.
%We kindly refer the details to readers.

\begin{lemma}
\label{l:ID}
Let $\varphi (x, X)$ be a $\BSig{0}$ formula.
Then the relation
$(x, y, X, Y, Z) \mapsto 
 P^X_{\varphi, x} [Y] = Z$
can be expressed by a $\mBDel{1}$ formula 
$\psi_{\Pphi} (x, X, Y, Z)$ if $|X|, |Y| \leq y$
in the same sense as in Lemma \ref{l:IID}.
Furthermore the sentence
$\forall x, y (\forall X \leq y) (\forall Y \leq x) (\exists ! Z \leq x) 
 \psi_{\Pphi} (x, y, X, Y, Z)$
is provable in $\mW{1}{1}$.
\end{lemma}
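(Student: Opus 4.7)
The proof follows the pattern of Lemma \ref{l:IID}, but replaces the second-order witness of the iteration-trace by a third-order hyperstring. Under $|X| \leq |y|$ in Lemma \ref{l:IID}, the trace had length polynomial in $y$ and so fitted into a single second-order string; with $|X| \leq y$ in the present setting the trace may have length up to $2^{y}$, which no second-order object can accommodate, making the use of $(\mBSig{0}\text{-}3\mathrm{COMP})$ from $\mW{1}{1}$ essential.

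Fix some $\BSig{0}$-definable pairing $\langle W, i \rangle$ of a string $W$ with a number $i < x$ into a single string. Introduce an auxiliary $\mBSig{0}$-formula $\psi^{\ast}(x, y, X, Y, Z, \mathcal{Z})$ asserting that $\mathcal{Z}$ encodes a valid $\varphi$-trace, namely
\begin{itemize}
\item $(\forall i < x)[\mathcal{Z}(\langle \emptyset, i \rangle) \leftrightarrow Y(i)]$,
\item $(\forall W < X)(\forall i < x)[\mathcal{Z}(\langle S(W), i \rangle) \leftrightarrow \varphi(i, \mathcal{Z}^{W})]$, where $\varphi(i, \mathcal{Z}^{W})$ denotes the result of replacing each atom $Y(j)$ in $\varphi(i, Y)$ by $\mathcal{Z}(\langle W, j \rangle) \wedge j < x$,
\item $(\forall i < x)[Z(i) \leftrightarrow \mathcal{Z}(\langle X, i \rangle)]$.
\end{itemize}
Because $\varphi \in \BSig{0}$, the formula $\psi^{\ast}$ is genuinely $\mBSig{0}$ with $\mathcal{Z}$ as a free third-order variable. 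Set $\psi_{\Pphi}(x, y, X, Y, Z) := (\exists \mathcal{Z})\, \psi^{\ast}$; this is a $\mBSig{1}$-formula that expresses $P^{X}_{\varphi,x}[Y] = Z$ whenever $|X|, |Y| \leq y$, and it validates the analogues of Lemma \ref{l:IID}.\ref{l:IID:1} and \ref{l:IID}.\ref{l:IID:2} by construction.

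To verify $\forall x, y(\forall X \leq y)(\forall Y \leq x)(\exists! Z \leq x)\,\psi_{\Pphi}$ in $\mW{1}{1}$, I would prove existence by $\mBSig{1}$-IND on $X$, ordered via the string predecessor $P$ of Definition \ref{d:pred}. The base case $X = \emptyset$ uses $(\mBSig{0}\text{-}3\mathrm{COMP})$ to produce $\mathcal{Z}$ with $\mathcal{Z}(U) \leftrightarrow (\exists i < x)(U = \langle \emptyset, i \rangle \wedge Y(i))$. In the inductive step, from a hyperstring $\mathcal{Z}$ realising $\psi^{\ast}$ at $P(X)$, apply $(\mBSig{0}\text{-}3\mathrm{COMP})$ once more, treating $\mathcal{Z}$ as a parameter, to obtain $\mathcal{Z}'$ agreeing with $\mathcal{Z}$ on indices $\langle W, i \rangle$ for $W \leq P(X)$ and satisfying $\mathcal{Z}'(\langle X, i \rangle) \leftrightarrow \varphi(i, \mathcal{Z}^{P(X)})$ for $i < x$; since $S(P(X)) = X$ by Lemma \ref{l:val_P}.\ref{l:val_P:0}, $\mathcal{Z}'$ witnesses $\psi^{\ast}$ at $X$. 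Uniqueness of $Z$ is proved in parallel by induction on $X$: any two valid traces agree slice-by-slice because the recursion is deterministic. As in Lemma \ref{l:IID}, the uniqueness of $Z$ converts $\psi_{\Pphi}$ into a provably equivalent $\mBPi{1}$-formula in $\mW{1}{1}$, yielding the $\mBDel{1}$ bound.

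The main obstacle I foresee is the bookkeeping in the inductive step: the defining condition for $\mathcal{Z}'$ must be kept strictly within $\mBSig{0}$ relative to $\mathcal{Z}$, which requires substituting the slice $\mathcal{Z}^{P(X)}$ into the $\BSig{0}$ formula $\varphi(i, Y)$ without introducing fresh third-order quantifiers, and verifying that the induction formula $(\exists \mathcal{Z})\,\psi^{\ast}$ really lies in $\mBSig{1}$ after this substitution. Once this syntactic discipline is maintained, the induction itself is routine and the equational conditions of Lemma \ref{l:ID} follow immediately from the first two clauses in the definition of $\psi^{\ast}$.
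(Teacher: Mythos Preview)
Your overall setup---encoding the iteration trace as a third-order object $\mathcal{Z}$ and defining $\psi_{\Pphi}$ as $\exists\mathcal{Z}\,\psi^{\ast}$---matches the paper's. The gap is in the existence argument. You propose to prove existence by $\mBSig{1}$-IND ``on $X$, ordered via the string predecessor $P$'', i.e.\ stepwise from $P(X)$ to $X$. But the induction axiom in $\mW{1}{1}$ is \emph{number} induction; there is no string-successor induction available, and you cannot reduce to number induction via something like $\val$ because here $|X|\leq y$ (not $|X|\leq |y|$ as in Lemma~\ref{l:IID}), so the numerical value of $X$ may be of order $2^{y}$ and is not represented by any first-order term. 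A step-by-step induction on $X$ would need exponentially many steps and is simply not licensed by $(\mBSig{1}\text{-IND})$.

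The paper resolves this by inducting on the \emph{length} $|X|$ (a number bounded by $y$) and using a divide-and-conquer: split $X=X_0+X_1$ with $|X_0|=|X_1|=|X|-1$, apply the induction hypothesis first to obtain a trace $\mathcal{Z}_0$ from $Y$ over $X_0$ ending in some $Z_0$, then again to obtain $\mathcal{Z}_1$ from $Z_0$ over $X_1$, and finally concatenate the two hyper strings via $(\mBSig{0}\text{-}3\mathrm{COMP})$. This is not a cosmetic difference: it is precisely what lets a polynomial-length induction (on $|X|\leq y$) simulate an exponential-length iteration. Your proof becomes correct once the predecessor-based step is replaced by this halving argument; the uniqueness and $\mBDel{1}$ parts then go through as you describe.
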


\begin{notation}
%In contrast to the constant $\emptyset$, the third order constant 
%$\emptyset^3$ is defined by
%$\emptyset^3 (X) \leftrightarrow X < \emptyset$.
We define a string function $(\mathcal Z)^X$, which denotes the 
$X$th component of a hyper string $\mathcal Z$,  by the axiom
$(\mathcal Z)^X = Y \leftrightarrow \mathcal Z (\numseq{X, Y})$.
For a hyper string $\mathcal Z$ we write 
$\exists ! \mathcal Z \leq x$ to refer to the uniqueness up to elements
 of length not exceeding $x$, i.e., 
$(\exists ! \mathcal Z \leq x) \psi (\mathcal Z)$
denotes $\exists \mathcal Z \psi (\mathcal Z)$ and additionally,
\begin{equation}
% \wedge
 \forall \mathcal Z_0,  \mathcal Z_1
 [\psi (\mathcal Z_0) \wedge \psi (\mathcal Z_1) \rightarrow
  (\forall Y \leq x) (\mathcal Z_0 (Y) \leftrightarrow \mathcal Z_1 (Y))
 ].
\label{e:!}
\end{equation}
\end{notation}

\begin{proof}%[of Lemma \ref{l:ID}]
Let $\psi (x, y, X, Y, Z, \mathcal Z)$ denote the 
$\mBSig{0}$ formula expressing
\begin{itemize} 
%\item $(\forall U \leq y) 
%       ((\exists V \leq x) (\mathcal Z)^{U} = V \rightarrow U \leq X)$,
%\item $(\forall U \leq y)
%       (U \leq X \rightarrow (\exists V \leq x) (\mathcal Z)^{U} = V
%       )$,
\item $(\forall U \leq y) 
        (U \leq X \rightarrow |(\mathcal Z)^{U}| \leq x)$,
\item $(\mathcal Z)^\emptyset = Y$, $(\mathcal Z)^{X} =Z$, and
\item $(\forall U \leq y) 
       (U < X \rightarrow (\forall i < x) 
        [(\mathcal Z)^{S(U)} (i) \leftrightarrow 
         \varphi (i, (\mathcal Z)^{U})
        ]
       )$.
\end{itemize}
By the definition of $\psi$, the relation 
$P^X_{\varphi, x} [Y] = Z$ is expressed by
the $\mBSig{1}$ formula
$\exists \mathcal Z \psi (x, y, X, Y, Z, \mathcal Z)$
if $|X| \leq y$.
It suffices to show that
$(\forall Y \leq x) (\exists ! Z \leq x) 
 (\exists ! \mathcal Z \leq \numseq{|X|, x})$
$\psi (x, X, Y, Z, \mathcal Z)$
hols in $\mW{1}{1}$.

Reason in $\mW{1}{1}$.
We only show the existence of such a string $Z$ and a hyper string 
$\mathcal Z$.
The uniqueness in the sense of (\ref{e:!}) can be shown accordingly.
By induction on $|X|$ we derive the 
$\mBSig{1}$ formula
$(\forall Y \leq x) (\exists Z \leq x) \exists \mathcal Z 
 \psi (x, X, Y, Z, \mathcal Z)$.
The argument is based on a standard ``divide-and-conquer method''.
%We only show the existence of such $Z$ and $\mathcal Z$.
%The uniqueness can be shown in a similar way. 
In the base case, $|X|=0$, i.e., $X = \emptyset$, and hence the assertion is clear.
The case that $|X| =1$, i.e., $X = S(\emptyset)$, is also clear.
Suppose that $|X| >1$.
Then we can find two strings $X_0$ and $X_1$ such that
$|X_0| = |X_1| = |X| -1$ and $X = X_0 + X_1$.
Fix a string $Y$ so that $|Y| \leq x$.
Then by IH we can find a string $Z_0$ and a hyper string 
$\mathcal Z_0$ such that $|Z_0| \leq x$ and
$\psi (x, X_0, Y, Z_0, \mathcal Z_0)$ hold.
Since $|Z_0| \leq x$, another application of IH yields $Z_1$ and 
$\mathcal Z_1$ such that $|Z_0| \leq x$ and
$\psi (x, X_1, Z_0, Z_1, \mathcal Z_1)$ hold.
Define a hyper string $\mathcal Z$ with use of 
$(\mBSig{0} \text{-3COMP})$ by
\begin{equation}
\begin{array}{rl}
 (\forall U \leq \numseq{|X|, x})
 [\mathcal Z(U) \leftrightarrow &
  (U^{[0]} \leq X_0 \wedge %(\exists V \leq x) 
   (\mathcal Z_0)^{U^{[0]}} = U^{[1]}
  ) \vee \\
&
  (X_0 < U^{[0]} \wedge %(\exists V \leq x) 
   (\mathcal Z_0)^{U^{[0]} \minus X_0} = U^{[1]}
  )
 ].
\end{array}
\label{e:CON}
\end{equation}
Intuitively $\mathcal Z$ denotes the concatenation
$\mathcal Z_0 \CON \mathcal Z_1$, the hyper string $\mathcal Z_0$
 followed by $\mathcal Z_1$.
Then by definition 
$\psi (x, X, Y, Z_1, \mathcal Z)$ holds.
%Now an application of 
%$(\mBSig{1} \text{-} \mathrm{IND})$
%enables us to conclude.
Due to the uniqueness of the string $Z$ and the hyper string 
$\mathcal Z$,  
the $\mBSig{1}$ formula
$\exists \mathcal Z \psi (x, y, X, Y, Z, \mathcal Z)$
is equivalent to the $\mBPi{1}$ formula
$(\forall V \leq x) (\forall \mathcal Z \leq \numseq{|X|, x})
 (\psi (x, X, Y, V, \mathcal Z) \rightarrow V=Z
 )$,
and hence is also a $\mBDel{1}$ formula.
\qed
\end{proof}

\begin{lemma}
\label{l:X+Y}
The following holds in $\mW{1}{1}$.
\[
  \forall x, y
 (\forall X \leq y)(\forall Y \leq y)(\forall Z \leq x)
 (|Y + X| \leq y \rightarrow 
  P^X_{\varphi, x} [P^Y_{\varphi, x} [Z]] = P^{Y+X}_{\varphi, x} [Z]
 ).
\]
\end{lemma}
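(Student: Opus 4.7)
The plan is to argue by induction on $X$ based on the recursion clauses of Definition \ref{d:RID}, exploiting the $\mBDel{1}$-representation of $P^X_{\varphi,x}[Y]$ guaranteed by Lemma \ref{l:ID} so that the relevant instance of induction is available in $\mW{1}{1}$. Specifically, I would fix $\varphi \in \BSig{0}$ and numbers $x, y$, and work with the statement
\[
\theta(X) :\equiv (\forall Y \leq y)(\forall Z \leq x)\bigl[|Y+X| \leq y \to P^X_{\varphi,x}[P^Y_{\varphi,x}[Z]] = P^{Y+X}_{\varphi,x}[Z]\bigr].
\]
By Lemma \ref{l:ID} each of these $P$-equalities unfolds to the $\mBDel{1}$ formula $\psi_{\Pphi}$ applied to its unique witness, so $\theta(X)$ is provably $\mBSig{1}$ in $\mW{1}{1}$, and the required instance of $(\mBSig{1}\text{-IND})$ is at our disposal.

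For the base case $X = \emptyset$, Definition \ref{d:RID}.\ref{d:RID:1} gives $P^{\emptyset}_{\varphi,x}[W] = W$ for every $W \leq x$; combined with $Y + \emptyset = Y$ (provable in $\mV{0}$), both sides of the equality collapse to $P^Y_{\varphi,x}[Z]$. For the step $X \mapsto S(X)$, a single application of Definition \ref{d:RID}.\ref{d:RID:2} with $U = X$ and $W = P^Y_{\varphi,x}[Z]$ yields, for $i<x$,
\[
 P^{S(X)}_{\varphi,x}[P^Y_{\varphi,x}[Z]](i) \leftrightarrow \varphi(i, P^X_{\varphi,x}[P^Y_{\varphi,x}[Z]]).
\]
The induction hypothesis applied to $X$ (with the same $Y, Z$) rewrites the argument as $P^{Y+X}_{\varphi,x}[Z]$. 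Using the arithmetic identity $Y + S(X) = S(Y+X)$ and a second appeal to Definition \ref{d:RID}.\ref{d:RID:2}, one obtains $P^{Y+S(X)}_{\varphi,x}[Z](i) \leftrightarrow \varphi(i, P^{Y+X}_{\varphi,x}[Z])$ as well, and the two sides agree by the uniqueness part of Lemma \ref{l:ID}, closing the induction.

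The main obstacle I expect is not the algebra but the careful propagation of the length bounds: one must verify in each step that $|Y+X| \leq y$ follows from the hypothesis $|Y+S(X)| \leq y$ (via monotonicity of length under the string ordering), and that the intermediate strings $P^Y_{\varphi,x}[Z]$ and $P^{Y+X}_{\varphi,x}[Z]$ have lengths at most $x$ so that Lemma \ref{l:ID} can be reinvoked at every step. These bookkeeping checks, together with the invocation of the identities $Y + \emptyset = Y$ and $Y + S(X) = S(Y+X)$ in $\mV{0}$, are the only non-routine ingredients of the argument.
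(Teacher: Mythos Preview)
Your argument has a genuine gap: the step you describe is \emph{string successor induction} (from $X$ to $S(X)$), but $(\mBSig{1}\text{-IND})$ in $\mW{1}{1}$ is induction on \emph{first-order numbers}. To run your induction you would need to induct on the numerical value of $X$, which for $|X|\leq y$ ranges up to $2^y-1$; that quantity is not available as a first-order term in bounded arithmetic, so the induction cannot be instantiated. Put differently, stepping through all strings $\emptyset, S(\emptyset), \dots, X$ is an exponentially long process in $y$, and $\mW{1}{1}$ does not license that directly.

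This is exactly why the paper argues by induction on the \emph{length} $|X|$ (a bona fide first-order number bounded by $y$) and uses a divide-and-conquer split $X=X_0+X_1$ with $|X_0|=|X_1|=|X|-1$. Three applications of the induction hypothesis then give
\[
P^{X}_{\varphi,x}[Z]=P^{X_0}_{\varphi,x}[P^{X_1}_{\varphi,x}[Z]],\quad
P^{Y}_{\varphi,x}[P^{X_0}_{\varphi,x}[W]]=P^{Y+X_0}_{\varphi,x}[W],\quad
P^{Y+X_0}_{\varphi,x}[P^{X_1}_{\varphi,x}[Z]]=P^{Y+X}_{\varphi,x}[Z],
\]
and chaining these yields the claim. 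Your base case and the algebraic identities are fine; the fix is to replace the $X\mapsto S(X)$ step by this halving argument, which is the standard device for simulating string recursion inside $\mW{1}{1}$ (and the same technique already used in Lemma~\ref{l:ID} and Lemma~\ref{l:numones}).
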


\begin{proof}
By the previous lemma the relation
$P^X_{\varphi, x} [P^Y_{\varphi, x} [Z]] = P^{Y+X}_{\varphi, x} [Z]$
can be expressed by a $\mBDel{1}$ formula if
$|X|, |Y| \leq y$.
Reason in $\mW{1}{1}$.
We show that
\[
 |X| \leq y \rightarrow 
 (\forall Y \leq y) (\forall Z \leq x) 
 (|Y+X| \leq y \rightarrow 
  P^X_{\varphi, x} [P^Y_{\varphi, x} [Z]] = P^{Y+X}_{\varphi, x} [Z]
 )
\]
holds by induction on $|X|$.
The base case that $|X|=0$ or $|X| = 1$ is clear.
Suppose $|X| > 0$.
Then we can find two strings $X_0$ and $X_1$ such that
$|X_0| = |X_1| = |X| -1$ and
$X_0 + X_1 = X$.
Fix a string $Z$ so that $|Z| \leq x$.
Since
$|X_1| = |X_0| < |X| \leq y$ and
$|X_0 + X_1| = |X| \leq y$,
IH yields 
$P^{X_0}_{\varphi, x} [P^{X_1}_{\varphi, x} [Z]] =
 P^{X_0 + X_1}_{\varphi, x} [Z]$.
Hence
\begin{equation}
 P^{Y}_{\varphi, x} [P^X_{\varphi, x} [Z]] =
 P^{Y}_{\varphi, x} [P^{X_0}_{\varphi, x} [P^{X_1}_{\varphi, x} [Z]]].
\label{e:X+Y:1}
\end{equation}
On the other hand, since
$|X_0| \leq y$,
$|Y+X_0| \leq |Y+X| \leq y$
and
$|P^{X_1}_{\varphi, x} [Z]| \leq x$,
another application of IH yields
\begin{equation}
 P^{Y}_{\varphi, x} [P^{X_0}_{\varphi, x} [P^{X_1}_{\varphi, x} [Z]]] =
 P^{Y+X_0}_{\varphi, x} [P^{X_1}_{\varphi, x} [Z]].
\label{e:X+Y:2}
\end{equation}
Farther, since 
$|Y+X_0| \leq y$ and $|X_1| \leq |X| \leq x$, 
the final application of IH yields
\begin{equation}
P^{Y+X_0}_{\varphi, x} [P^{X_1}_{\varphi, x} [Z]] =
 P^{Y+X_0 + X_1}_{\varphi, x} [Z] =
 P^{Y+X}_{\varphi, x} [Z].
\label{e:X+Y:3}
\end{equation}
Combining equation (\ref{e:X+Y:1}), (\ref{e:X+Y:2}) and (\ref{e:X+Y:3}) allows us to conclude.
\qed
\end{proof}

\begin{definition}
\label{d:numones}
A string function $\numones [Y] (X, \mathcal X)$, which counts the number of elements of
$\mathcal X$ (starting with $Y$) such that $\leq X$, is defined by
\begin{eqnarray*}
\numones [Y] (\emptyset, \mathcal X) &=& Y, \\
\numones [Y] (S(X), \mathcal X) &=& 
  \begin{cases}
  S(\numones [Y] (X, \mathcal X)) & \text{if $\mathcal X (X)$ holds,} \\
  \numones [Y] (X, \mathcal X) & \text{if $\neg \mathcal X (X)$ holds.}
  \end{cases} 
\end{eqnarray*}
\end{definition}

\begin{lemma}
\label{l:numones}
The function $\numones$ is $\mBDel{1}$-definable in
$\mW{1}{1}$.
\end{lemma}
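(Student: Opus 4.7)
The strategy mirrors the proof of Lemma \ref{l:ID}: I encode the entire sequence of intermediate values of $\numones$ into a single hyper string and exploit $(\mBSig{0} \text{-} 3 \mathrm{COMP})$ together with a divide-and-conquer induction on $|X|$. Concretely, I introduce a $\mBSig{0}$ formula $\psi(Y, X, \mathcal{X}, W, \mathcal{Z})$ expressing that $(\mathcal{Z})^{\emptyset} = Y$, $(\mathcal{Z})^{X} = W$, each component $(\mathcal{Z})^U$ with $U \leq X$ is length-bounded by a suitable polynomial in $|Y|$ and $|X|$, and the step clause $(\mathcal{Z})^{S(U)} = S((\mathcal{Z})^U)$ when $\mathcal{X}(U)$, otherwise $(\mathcal{Z})^{S(U)} = (\mathcal{Z})^U$, holds for every $U < X$. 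The $\mBSig{1}$ definition of the graph $\numones[Y](X, \mathcal{X}) = W$ is then the appropriately bounded existential closure $\exists \mathcal{Z}\, \psi(Y, X, \mathcal{X}, W, \mathcal{Z})$.

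Reasoning in $\mW{1}{1}$, I next establish that such $\mathcal{Z}$ and $W$ exist and are unique. Existence proceeds by $\mBSig{1}$-induction on $|X|$ in the divide-and-conquer style of Lemma \ref{l:ID}: the cases $|X| \leq 1$ are immediate; for $|X| > 1$, split $X = X_0 + X_1$ with $|X_0| = |X_1| = |X| - 1$, apply IH to $(Y, X_0, \mathcal{X})$ to obtain $\mathcal{Z}_0$ together with the intermediate value $W_0$, and then apply IH to $(W_0, X_1, \mathcal{X}')$, where the shifted hyper string $\mathcal{X}'$ is produced by $(\mBSig{0} \text{-} 3 \mathrm{COMP})$ via $\mathcal{X}'(U) \leftrightarrow U < X_1 \wedge \mathcal{X}(X_0 + U)$, thereby obtaining $\mathcal{Z}_1$ and the output $W$. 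A combined witness $\mathcal{Z}$ is assembled by $(\mBSig{0} \text{-} 3 \mathrm{COMP})$ in the spirit of equation (\ref{e:CON}), setting $(\mathcal{Z})^U = (\mathcal{Z}_0)^U$ for $U \leq X_0$ and $(\mathcal{Z})^U = (\mathcal{Z}_1)^{U \minus X_0}$ for $X_0 < U \leq X$. Using the elementary identity $(X_0 + U) \minus X_0 = U$ noted after Definition \ref{d:pred} together with the junction equality $(\mathcal{Z}_0)^{X_0} = W_0 = (\mathcal{Z}_1)^{\emptyset}$, one verifies $\psi(Y, X, \mathcal{X}, W, \mathcal{Z})$. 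Uniqueness of $\mathcal{Z}$ and $W$ follows from a parallel $\mBSig{1}$-induction on $|X|$.

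Given existence and uniqueness, the $\mBSig{1}$ formula $\exists \mathcal{Z}\, \psi(Y, X, \mathcal{X}, W, \mathcal{Z})$ is provably equivalent in $\mW{1}{1}$ to the $\mBPi{1}$ formula stating that every purported witness $(\mathcal{Z}, V)$ of appropriate length satisfies $V = W$, which upgrades the definition to $\mBDel{1}$, exactly as the final step of the proof of Lemma \ref{l:ID}. The main obstacle I foresee is the junction analysis at $U = X_0$ in the divide-and-conquer step: one must confirm that the shift $\mathcal{X}'$ and the gluing of $\mathcal{Z}_0$ with $\mathcal{Z}_1$ interact correctly with string addition and limited subtraction so that the recursive clause of $\psi$ is preserved at the boundary. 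This should reduce to routine string arithmetic of the sort developed around Definition \ref{d:pred} and Lemma \ref{l:val_P}, together with a direct transcription of the concatenation construction already used in equation (\ref{e:CON}).
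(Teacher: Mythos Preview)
Your proposal is correct and follows essentially the same route as the paper: encode the trace of intermediate values in a hyper string via a $\mBSig{0}$ formula, prove existence and uniqueness by divide-and-conquer $\mBSig{1}$-induction on $|X|$, glue the two halves as in (\ref{e:CON}), and upgrade to $\mBDel{1}$ via uniqueness. Your explicit shift $\mathcal X' (U) \leftrightarrow \mathcal X (X_0 + U)$ in the second recursive call is a detail the paper's proof suppresses (it writes $\mathcal X$ in both applications of IH), but that shift is exactly what makes the step clause of the glued witness hold at indices $U \geq X_0$, so your extra care here is warranted rather than superfluous.
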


\begin{proof}
Let $\psi_{\numones} (X, Y, Z, \mathcal X, \mathcal Y)$ denote the $\mBSig{0}$
formula expressing
\begin{itemize}
\item $Z \leq Y + X$, 
\item $(\mathcal Y)^\emptyset = Y$, 
\item $(\mathcal Y)^{X} = Z$,
\item $(\forall U \leq |X|)
       [U < X \wedge \mathcal X (U) \rightarrow 
        (\mathcal Y)^{S(U)} = S((\mathcal Y)^U)
       ]$, and
\item $(\forall U \leq |X|)
       [U < X \wedge \neg \mathcal X (U) \rightarrow 
        (\mathcal Y)^{S(U)} = (\mathcal Y)^U
       ]$.
\end{itemize}
Then by definition the $\mBSig{1}$ formula
$\exists \mathcal Y 
 \psi_{\numones} (X, Y, Z, \mathcal X, \mathcal Y)$
defines the graph
$\numones [Y] (X, \mathcal X) = Z$
of $\numones$.
We show that if $|X| \leq x$, then
\[
 (\forall Y \leq x)
 [|Y+X| \leq x \rightarrow 
  (\exists ! Z \leq x) 
  (\exists ! \mathcal Y \leq \numseq{|X|, x})
  \psi_{\numones} (X, Y, Z, \mathcal X, \mathcal Y)
 ]
\]
holds in $\mW{1}{1}$.
Reason in $\mW{1}{1}$.
Given $x$, we only show the existence of such a string $Z$ and a hyper
 string $\mathcal Y$ by induction on $|X|$.
The uniqueness can be shown in a similar manner.
Fix $x$ and $Y$ so that $|Y| \leq x$ and $|Y+X| \leq x$.
In case that $|X| = 0$, i.e., $X = \emptyset$,
define $\mathcal Y$ by
\[
 (\forall U \leq \numseq{0, x})
 [\mathcal Y (U) \leftrightarrow (U = \numseq{\emptyset, Y})].
\] 
Then 
$|Y| \leq x$,
$Y \leq Y + \emptyset$
and
$\psi_{\numones} (\emptyset, Y, Y, \mathcal X, \mathcal Y)$
hold.
In the case that $|X| = 1$, i.e., $X = S(\emptyset)$,
define $\mathcal Y$ by
\begin{equation*}
\begin{array}{rl}
(\forall U \leq \numseq{1, x})
[\mathcal Y (U) 
\leftrightarrow
U = \numseq{\emptyset, Z} \vee %\\
&
(\mathcal X (\emptyset) \wedge U = \numseq{S(\emptyset), S(Y)}) \wedge
   \\   
&
(\neg \mathcal X (\emptyset) \wedge U = \numseq{S(\emptyset), Y})   
].
\end{array}
\end{equation*}
Clearly
$|(\mathcal Y)^{S(\emptyset)}| \leq |S(Y)| =
 |Y + S(\emptyset)|$,
$(\mathcal Y)^{S(\emptyset)} \leq S(Y) = Y + S(\emptyset)$
and
$\psi_{\numones} (S(\emptyset), (\mathcal Y)^{S(\emptyset)}, 
                  Y, \mathcal X, \mathcal Y)$
hold.
For the induction step, suppose $|X| >1$.
Then there exist strings $X_0$ and $X_1 $ such that 
$|X_0| = |X_1| = |X| -1$
and $X_0 + X_1 = X$.
By assumption $|Y + X_0| \leq |Y+X| \leq x$.
Hence IH yields a string $Z_0$ and a hyper string $\mathcal Y_0$ such that
$|Z_0| \leq x$
and
$\psi_{\numones} (X_0, Y_0, Z_0, \mathcal X, \mathcal Y_0)$
hold.
In particular $Z_0 \leq Y + X_0$ holds.
This implies
$|Z_0 + X_1| \leq |Y + X_0 + X_1| = |Y + X| \leq x$.
Thus another application of IH yields
$Z_1$ and $\mathcal Y_1$ such that 
$|Z_1| \leq x$
and
$\psi_{\numones} (X_1, Z_0, Z_1, \mathcal X, \mathcal Y_1)$
hold.
Define $\mathcal Y$ in the same way as (\ref{e:CON}) in the proof of
 Lemma \ref{l:ID}, i.e.,
$\mathcal Y = \mathcal Y_0 \CON \mathcal Y_1$.
It is not difficult to see that
$\psi_{\numones} (X, Y, Z_1, \mathcal X, \mathcal Y)$
holds.
%Letting $y = |X|$ enables us to conclude.
Thanks to the uniqueness of $Z$ and $\mathcal Y$,
one can see that
$\exists \mathcal Y 
 \psi_{\numones} (X, Y, Z, \mathcal X, \mathcal Y)$
is a $\mBDel{1}$ formula.
\qed
\end{proof}

\begin{lemma}
\label{l:comp}
The axiom
$(\mBSig{1} \text{-} 3 \mathrm{COMP})$ 
of third order comprehension for $\mBSig{1}$ formulas holds in $\mW{1}{1}$.
\end{lemma}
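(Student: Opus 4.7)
The strategy is to reduce $(\mBSig{1}\text{-}3\mathrm{COMP})$ to a third-order choice principle for $\mBSig{0}$ matrices, which is in turn proved by $(\mBSig{1}\text{-}\mathrm{IND})$ along an enumeration of the strings to be classified.

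Write $\varphi (Y) \equiv (\exists \mathcal{X} \leq t)\, \psi (Y, \mathcal{X})$ with $\psi \in \mBSig{0}$, fix $x$, and introduce the $\mBSig{0}$-definable row selector $\mathcal{W} \{ Y \}$ given by $\mathcal{W} \{ Y \} (A) \leftrightarrow \mathcal{W} (\numseq{Y, A})$. Suppose we have established the choice principle
\[
 \exists \mathcal{W}\, (\forall Y \leq x)\,
 [\varphi (Y) \rightarrow \psi (Y, \mathcal{W} \{ Y \})].
\]
Then $(\mBSig{0}\text{-}3\mathrm{COMP})$, applied with $\mathcal{W}$ as a parameter, yields the witnessing hyper-string $\mathcal{Z}$ via $\mathcal{Z}(Y) \leftrightarrow \psi (Y, \mathcal{W} \{ Y \})$. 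The implication $\mathcal{Z}(Y) \rightarrow \varphi(Y)$ is immediate from the existential witness $\mathcal{W} \{ Y \}$, while the converse is exactly the choice property.

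The choice principle itself I plan to prove by $(\mBSig{1}\text{-}\mathrm{IND})$ on $u$, producing an approximating hyper-string $\mathcal{W}_u$ with the property restricted to indices $v < u$, where the strings $Y$ with $|Y| \leq x$ are enumerated as $Y^{(0)}, Y^{(1)}, \ldots$ using the $\val$ function from Lemma~\ref{l:val}. The base $u = 0$ is vacuous. In the step $u \to S(u)$, reason classically by cases on $\varphi (Y^{(u)})$: in the positive case, invoke the existential to select $\mathcal{X}_u$, and use $(\mBSig{0}\text{-}3\mathrm{COMP})$ to splice it into the row indexed by $Y^{(u)}$,
\[
 \mathcal{W}_{u+1} (Z) \leftrightarrow
 (Z^{[0]} = Y^{(u)} \wedge \mathcal{X}_u (Z^{[1]})) \vee
 (Z^{[0]} \neq Y^{(u)} \wedge \mathcal{W}_u (Z));
\]
in the negative case, take $\mathcal{W}_{u+1} = \mathcal{W}_u$, the implication being vacuous at $v = u$.

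The main obstacle is that the literal induction formula $\exists \mathcal{W}\, (\forall v < u)\, [\varphi (Y^{(v)}) \rightarrow \psi (Y^{(v)}, \mathcal{W} \{ Y^{(v)} \})]$ is not $\mBSig{1}$: unfolding the antecedent $\varphi$ and transposing the implication introduces a universal third-order quantifier, one level above what $(\mBSig{1}\text{-}\mathrm{IND})$ can handle. To remedy this I would reformulate the inductive claim so that its matrix is genuinely $\mBSig{0}$, for instance by enlarging $\mathcal{W}$ with a $\mBSig{0}$-decidable flag channel that records which branch of the step-case analysis was used at each index $v < u$ and, in the positive branch, which witness $\mathcal{X}_v$ was inserted. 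The step case proceeds exactly as sketched above, and the original choice principle is then read off at $u = 2^{x+1}$ from the flag pattern together with the recorded witnesses. Making this flagged reformulation tight enough to recover the original statement while remaining within $\mBSig{1}$, rather than the inductive construction itself, is the delicate point.
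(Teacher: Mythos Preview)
Your reduction to a choice principle is natural, and the complexity obstruction you flag is real. But there is a more fundamental obstacle that no flag reformulation can repair: the induction you propose runs over a number variable $u$ up to $2^{x+1}$, one step per string $Y$ with $|Y|\le x$. In the Cook--Nguyen two-sorted setting the number sort is the feasible sort; there is no number-valued term denoting $2^{x+1}$ for an arbitrary number $x$, and $(\mBSig{1}\text{-}\mathrm{IND})$ in $\mW{1}{1}$ is ordinary number induction. You therefore cannot carry the enumeration to its end, regardless of how the matrix is packaged. (Lemma~\ref{l:val} makes $\val$ available only under the hypothesis $x\le|y|$, precisely so that the output stays in the feasible range; your plan would need the inverse direction, producing exponentially many number indices from a length bound $x$.)

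The paper's proof avoids both the enumeration problem and the complexity problem by a counting argument. One considers hyper-strings $\mathcal Y$ all of whose members satisfy $\varphi$ and measures their size by $\numones(S(\One(x)),\mathcal Y)$; crucially this count is a \emph{string}, so it can range over the full exponential domain. Either some maximal count $X_0<S(\One(x))$ is attained, and then a one-step maximality argument shows the corresponding $\mathcal Y$ already coincides with $\{Y\le x:\varphi(Y)\}$; or no maximum exists below $S(\One(x))$, and a divide-and-conquer induction on the \emph{length} $|X|$ (hence polynomially many steps, each halving $X=X_0+X_1$) pushes the count up to $S(\One(x))$, forcing $\varphi(Y)$ for every $Y\le x$. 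This length-induction-with-halving is the standard device, used throughout Section~\ref{s:ID}, for traversing an exponential range in feasibly many steps; it is exactly what is missing from your sketch.
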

Readers might recall that the $\mV{1}$ can be axiomatised by 
$(\BSig{0} \text{-COMP})$ and
$(\BSig{1} \text{-IND})$ instead of
$(\BSig{1} \text{-COMP})$, cf. \cite[p. 149, Lemma VI.4.8]{CN10}.
Lemma \ref{l:comp} can be shown with the same idea as the proof of this fact.
For the sake of completeness, we give a proof in the appendix.
%We start with showing a couple of auxiliary lemmas.

\begin{theorem}
\label{t:ID}
The axiom 
$(\mathrm{\Sigma}^B_0 \text{-} \mathrm{ID})$
of $\BSig{0}$ inductive definitions holds in $\mW{1}{1}$
in the same sense as in Theorem \ref{t:V}.
%Hence $\mathrm{\Sigma}^B_0 \text{-} \mathrm{ID}$ can be regarded as a subsystem of $\mW{1}{1}$.
\end{theorem}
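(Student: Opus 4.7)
The plan is to adapt Theorem \ref{t:IID}'s pigeonhole strategy, replacing the inflationary counting with a direct pigeonhole on distinct iterates inside the third-order setting of $\mW{1}{1}$. Fix $\varphi \in \BSig{0}$ and $x$, and reason in $\mW{1}{1}$. By Lemma \ref{l:ID} applied with auxiliary parameter $y := x+2$, the iterate $P^X_{\varphi, x}[\emptyset]$ is $\mBDel{1}$-definable whenever $|X| \leq x+2$, and every value has length at most $x$. Using $(\mBSig{1}\text{-}3\mathrm{COMP})$ from Lemma \ref{l:comp}, bundle these iterates into a single hyper string $\mathcal Z$ satisfying $(\mathcal Z)^X = P^X_{\varphi, x}[\emptyset]$ for every $X \leq S(\One(x))$; since $|S(\One(x))| = x+1$ and $S(\One(x))$ codes the number $2^x$, this enumerates exactly $2^x + 1$ iterates, each of length $\leq x$.

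Suppose for contradiction that no $U, V \leq x+1$ with $V \neq \emptyset$ witnesses the $(\BSig{0}\text{-}\mathrm{ID})$ axiom. Specialising to $U+V \leq S(\One(x))$ forces $(\mathcal Z)^{U_1} \neq (\mathcal Z)^{U_2}$ for all $0 \leq U_1 < U_2 \leq S(\One(x))$, so the $2^x + 1$ iterates in this range are pairwise distinct. Because each has length $\leq x$, the $\mBDel{1}$-definable map $X \mapsto \val(x, (\mathcal Z)^X)$ (Lemma \ref{l:val}) injects $\{0, 1, \dots, 2^x\}$ into $\{0, 1, \dots, 2^x - 1\}$, producing the desired pigeonhole contradiction.

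To execute this counting step precisely inside $\mW{1}{1}$, I would introduce a $\mBDel{1}$-definable ``distinct-count'' function $d(X)$ enumerating the pairwise-distinct values among $(\mathcal Z)^0, \dots, (\mathcal Z)^X$: apply $(\mBSig{1}\text{-}3\mathrm{COMP})$ again to package the predicate ``already appears before $X$'' as an indicator hyper string, and feed it to $\numones$ (Lemma \ref{l:numones}) to obtain $d$. A $\mBSig{1}$-induction on $X$ shows that under the no-cycle hypothesis $d(X) = X + 1$, while the $\val$-injection of Lemma \ref{l:val} gives $d(X) \leq 2^x$; substituting $X := S(\One(x))$ yields the numerical contradiction $2^x + 1 \leq 2^x$. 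The theorem's translation claim, namely that every $\BSig{0}\text{-ID}$-proof can be rewritten as an $\mW{1}{1}$-proof, then follows by an outer induction on proof length, exactly as in Theorem \ref{t:V}.

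The main obstacle is orchestrating this pigeonhole across two ontological levels — hyper strings indexed by length-$\leq x+1$ strings on one side, and the numerical $\val$-image on the other — using only the $\mBSig{1}$-induction and comprehension schemes actually established in $\mW{1}{1}$. Once the indicator hyper string is in place via Lemma \ref{l:comp} and $d$ is shown $\mBDel{1}$-definable with its two bounds $d(X) = X+1$ and $d(X) \leq 2^x$, the contradiction and hence the axiom fall out immediately.
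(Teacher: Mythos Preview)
Your overall pigeonhole intuition matches the paper's, but there is a genuine gap in the execution: the step ``a $\mBSig{1}$-induction on $X$ shows $d(X)=X+1$'' is not available in $\mW{1}{1}$. The system has $(\mBSig{1}\text{-}\mathrm{IND})$ only for \emph{number} variables, while your $X$ ranges over strings up to $S(\One(x))$, i.e.\ over values up to $2^x$; straight induction along $X\mapsto S(X)$ would require exponentially many steps and is precisely what bounded arithmetic cannot do. This is why every argument in Sections~\ref{s:PSPACE}--\ref{s:ID} (Lemma~\ref{l:ID}, Lemma~\ref{l:X+Y}, Lemma~\ref{l:numones}, and the Claim inside Theorem~\ref{t:ID}) proceeds by divide-and-conquer induction on the \emph{length} $|X|$ rather than on $X$ itself.

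The paper's proof is organised around that observation. It proves the stronger \emph{relativised} form of the axiom (Definition~\ref{d:RID}), with a variable start value $Y$, so that when one splits $X=X_0+X_1$ with $|X_0|=|X_1|=|X|-1$ the induction hypothesis applies both to the first half (start $Y$) and to the second half (start $P^{X_0}_{\varphi,x}[Y]$), via Lemma~\ref{l:X+Y}. Your fixed start $\emptyset$ would block the divide-and-conquer on the second half. A smaller but related problem: your appeal to Lemma~\ref{l:val} is illegitimate here, since that lemma only makes $\val(x,\cdot)$ definable when $x\leq |y|$ for some number $y$, which would force $y\approx 2^x$; the paper never converts to numerical values but stays with the string-valued $\numones$ of Definition~\ref{d:numones} and compares $\numones(W,\mathcal P^X[Y])\leq X$ as strings.
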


\begin{proof}%[of Theorem \ref{t:ID}]
Instead of showing that the axiom $(\BSig{0} \text{-ID})$ 
holds in $\mW{1}{1}$, we show that even the axiom of relativised
$\BSig{0}$ inductive definitions holds in $\mW{1}{1}$.  
Let $\varphi \in \BSig{0}$.
We reason in $\mW{1}{1}$.
Fix $x$ arbitrarily.
Given $X$ and $Y$, we define a hyper string $\mathcal P^X [Y]$ 
with use of 
$(\mBSig{1} \text{-3COMP})$ by
\[
 (\forall Z \leq x) [\mathcal P^X [Y] (Z) \leftrightarrow 
 (\exists U \leq |X|) [U < X \wedge P^U_{\varphi, x} [Y] = Z]
           ].
\]
\begin{claim}
For a string $W$, if $x < |W|$, then the following holds.
\begin{equation}
\begin{array}{rl}
(\forall Y \leq x)&
[\numones (W, \mathcal P^X [Y]) \leq X \rightarrow \\
&(\exists U, V \leq |X|)
 (U < V \leq X \wedge
  P^V_{\varphi, x} [Y] = P^U_{\varphi, x} [Y] 
 )
].
\end{array}
\label{e:U=V}
\end{equation}
\end{claim} 
Assume the claim. 
Since 
$\numones (S(\One (x)), \mathcal P^{S(\One (x))} [Y]) \leq 
 S(\One (x))$
by the definition of $\numones$ and $x < x+1 = |S(\One (x))|$,
(\ref{e:U=V}) then implies the instance of ($\BSig{0} \text{-ID}$)  
in case of $\varphi$.

The rest of the proof is devoted to prove the claim.
Let us observe that (\ref{e:U=V}) is a $\mBSig{1}$ statement.
We show the claim by induction on $|X|$.
In the base case, $X=\emptyset$ and hence (\ref{e:U=V}) trivially holds.
The case that $X = S(\emptyset)$ is also trivial.
For the induction step, suppose $|X| > 1$.
Then there exist strings $X_0$ and $X_1$ such that
$|X_0| = |X_1| = |X| -1$ and 
$X_0 + X_1 = X$.
Fix a string $Y$ so that $|Y| \leq x$ and suppose 
$\numones (W, \mathcal P^X [Y]) \leq X$.
By the definition of the hyper string $\mathcal P^X [Y]$ and Lemma 
\ref{l:X+Y}, 
for any $Z$, if $|Z| \leq x$, then
$\mathcal P^X [Y] (Z) \leftrightarrow
 \mathcal P^{X_0} [Y] (Z) \vee 
 \mathcal P^{X_1} [P^{X_0}_{\varphi, x} [Y]] (Z)$
holds, i.e.,
$\mathcal P^X [Y] =
 \mathcal P^{X_0} [Y] \cup \mathcal P^{X_1} [P^{X_0}_{\varphi, x} [Y]] 
$.
On the other hand we can assume that 
%if $|Z| \leq x$, then
$(\forall U < X_0)(\forall V < X_1)
% (S(X_0) \leq V \rightarrow 
 P^U_{\varphi, x} [Y] \neq 
 P^{V}_{\varphi, x} [P^{X_0}_{\varphi, x} [Y]]
% )
$ 
holds, i.e.,
$\mathcal P^{X_0} [Y] \cap \mathcal P^{X_1} [P^{X_0}_{\varphi, x} [Y]] 
 = \emptyset$.
This yields
%\begin{claim}
\begin{eqnarray}
&&
 \numones (W, \mathcal P^{X} [Y]) 
\nonumber \\
&=&
 \numones (W, \mathcal P^{X_0} [Y]) +
 \numones (W, \mathcal P^{X_1} [P^{X_0}_{\varphi, x} [Y]]).
\label{e:numones P}
\end{eqnarray} 
%\end{claim}
%One can see that
%$\mathcal P^X [Z] \setminus \mathcal P^{X_0} [Z] =
% \mathcal P^{X_1} [P^{X_0} [Z]]$.
%We can assume $\mathcal P^{X_0}[Z]$

{\sc Case.}
$\numones (W, \mathcal P^{X_0} [Y]) \leq X_0$:
In this case IH yields 
two strings $U_0$ and $V_0$ such that 
$|U_0|, |V_0| \leq |X_0|$,
$U_0 < V_0 \leq X_0$ and 
$P^{V_0}_{\varphi, x} [Z] = P^{U_0}_{\varphi, x} [Z]$.
Since $|X_0| \leq |X|$ and $\leq X_0 \leq X$, 
we can define $U$ and $V$ by $U=U_0$ and $V=V_0$.

{\sc Case.}
$X_0 < \numones (W, \mathcal P^{X_0} [Y])$:
In this case,
$\numones (W, \mathcal P^{X_1} [P^{X_0} [Y]])$ $\leq X_1$ by the equality
(\ref{e:numones P}).
Since $|P^{X_0} [Y]| \leq x$ by definition, another application of IH
yields two strings $U_1$ and $V_1$ such that
$|U_1|, |V_1| \leq |X_1|$,
$U_1 <  V_1 \leq X_1$ and
$P^{V_1}_{\varphi, x} [P^{X_0}_{\varphi, x} [Y]] =
 P^{U_1}_{\varphi, x} [P^{X_0}_{\varphi, x} [Y]]$
hold.
 Define strings $U$ and $V$ by
$U = X_0 + U_1$ and $V = X_0 + V_1$.
Since 
$P^{V}_{\varphi, x} [Y] =
 P^{V_1}_{\varphi, x} [P^{X_0}_{\varphi, x} [Y]]$
and
$P^{U}_{\varphi, x} [Y] =
 P^{U_1}_{\varphi, x} [P^{X_0}_{\varphi, x} [Y]]$
by Lemma \ref{l:X+Y}, now it is easy to check that the assertion (\ref{e:U=V}) holds.
\qed
\end{proof}

\begin{corollary}
Every function $\BSig{1} (\LID)$-definable in 
$\BSig{0} \text{-} \mathrm{ID}$ is polyspace computable.
\end{corollary}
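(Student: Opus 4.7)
The plan is to mirror the proof of Corollary \ref{c:IID} exactly, replacing the reduction target $\mV{1}$ with the third order system $\mW{1}{1}$ and replacing Proposition \ref{p:ptime} with Proposition \ref{p:pspace}. Concretely, suppose $\psi(\vec X, Y) \in \BSig{1}(\LID)$ is such that $\forall \vec X \exists! Y\, \psi(\vec X, Y)$ is provable in $\BSig{0}\text{-ID}$ and defines the graph of $f$ under the standard interpretation.

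First I would invoke the transfer result encoded in Theorem \ref{t:ID} (together with its analog of the proof-length induction used in Theorem \ref{t:V}): since $(\BSig{0}\text{-ID})$ holds in $\mW{1}{1}$ in the sense that its axiom is provable in $\mW{1}{1}$ when the fixed-point predicate $P_\varphi$ is interpreted via the $\mBDel{1}$ formula $\psi_{P_\varphi}$ supplied by Lemma \ref{l:ID}, an induction on the length of a $\BSig{0}\text{-ID}$-proof yields a $\mBSig{1}$ formula $\psi'(\vec X, Y)$ that is provable in $\mW{1}{1}$ and provably equivalent to $\psi$ in $\mW{1}{1}(\LID)$. In particular, $\psi$ and $\psi'$ agree under the standard interpretation, so $\psi'$ also defines the graph of $f$, and $\forall \vec X \exists! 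Y\, \psi'(\vec X, Y)$ is provable in $\mW{1}{1}$.

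Hence $f$ is $\mBSig{1}$-definable in $\mW{1}{1}$. Applying Proposition \ref{p:pspace} (Skelley's characterisation of polyspace as the $\mBSig{1}$-definable functions of $\mW{1}{1}$) then yields that $f$ is polyspace computable, completing the argument.

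The main work has already been done in Lemma \ref{l:ID}, Lemma \ref{l:X+Y}, Lemma \ref{l:numones}, Lemma \ref{l:comp} and Theorem \ref{t:ID}; the only delicate point in the corollary itself is to ensure that the proof-length induction underlying Theorem \ref{t:ID} extends from the axiom $(\BSig{0}\text{-ID})$ to arbitrary $\BSig{1}(\LID)$-theorems, but this is an entirely routine verification of the non-logical axioms and logical rules analogous to the one implicit in the proof of Theorem \ref{t:V}; the $\mBDel{1}$-definability of $P_\varphi$ guarantees that no complexity blow-up occurs when eliminating $\LID$-atoms from $\BSig{1}$ quantifier prefixes.
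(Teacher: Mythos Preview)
Your proposal is correct and follows essentially the same approach as the paper: invoke Lemma \ref{l:ID} and Theorem \ref{t:ID} together with the proof-length induction of Theorem \ref{t:V} to replace the $\BSig{1}(\LID)$ defining formula by an equivalent $\mBSig{1}$ formula provable in $\mW{1}{1}$, and then apply Proposition \ref{p:pspace}. The paper's proof is more terse but cites exactly the same ingredients and draws the same conclusion.
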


\begin{proof}
Suppose that a $\BSig{1} (\LID)$ formula $\psi$ is provable in 
$\BSig{0} \text{-} \mathrm{ID}$.
Then, as in the proof of Theorem \ref{t:V}, from Lemma \ref{l:ID} and Theorem \ref{t:ID} one can find a
$\mBSig{1}$ formula $\psi'$ provable in $\mW{1}{1}$ and
provably equivalent to $\psi$ in $\mW{1}{1} (\LID)$.
In particular $\psi$ and $\psi'$ are equivalent under the underlying interpretation.
Hence every string function $\BSig{1} (\LID)$-definable in 
$\BSig{0} \text{-} \mathrm{ID}$
is $\mBSig{1}$-definable in $\mW{1}{1}$.
Thus employing %a fact (Skelley \cite[Thoerem 6]{skelley04}) that every
 %string function $\mBSig{1}$ definable in $\mW{1}{1}$ is
 %polyspace computable 
Proposition \ref{p:pspace} enables us
 to conclude.
\qed
\end{proof}

\begin{corollary}
\label{c:PSPACE}
A predicate belongs to {\rm PSPACE} if and only if it is
$\BDel{1} (\LID)$-definable in 
$\BSig{0} \text{-} \mathrm{ID}$.
\end{corollary}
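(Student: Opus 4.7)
The plan is to deduce Corollary~\ref{c:PSPACE} from the two preceding results of this section (every polyspace function is $\BSig{1}(\LID)$-definable in $\BSig{0}\text{-ID}$, and every function $\BSig{1}(\LID)$-definable in $\BSig{0}\text{-ID}$ is polyspace computable) by the standard characteristic-function argument, exactly parallel to how Corollary~\ref{c:P} is obtained from Theorem~\ref{t:polytime} and Corollary~\ref{c:IID}.

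For the forward direction, suppose a predicate $R(X)$ belongs to $\mathrm{PSPACE}$. Then its characteristic function $\chi_R$, which outputs $S(\emptyset)$ if $R(X)$ holds and $\emptyset$ otherwise, is polyspace computable. By the first theorem of this section we obtain a $\BSig{1}(\LID)$ formula $\psi_{\chi_R}(X,Y)$ defining its graph, with $\BSig{0}\text{-ID}$ proving $\forall X \exists! Y\, \psi_{\chi_R}(X,Y)$. Then $R(X)$ is equivalent in $\BSig{0}\text{-ID}$ to the $\BSig{1}(\LID)$ formula $\psi_{\chi_R}(X, S(\emptyset))$, and by the provable uniqueness of $Y$, also to the $\BPi{1}(\LID)$ formula $\forall Y (\psi_{\chi_R}(X,Y) \rightarrow Y = S(\emptyset))$, so $R$ is $\BDel{1}(\LID)$-definable in $\BSig{0}\text{-ID}$.

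For the reverse direction, suppose $R$ is $\BDel{1}(\LID)$-definable in $\BSig{0}\text{-ID}$, witnessed by a $\BSig{1}(\LID)$ formula $\sigma(X)$ and a $\BPi{1}(\LID)$ formula $\pi(X)$ that are provably equivalent. Then $\chi_R$ can be defined by the $\BSig{1}(\LID)$ formula $\psi(X,Y) \equiv (Y = S(\emptyset) \wedge \sigma(X)) \vee (Y = \emptyset \wedge \neg\pi(X))$, where $\neg\pi(X)$ is itself $\BSig{1}(\LID)$ by construction. Totality and uniqueness of $Y$ with respect to $\psi$ are provable in $\BSig{0}\text{-ID}$ using the provable equivalence of $\sigma$ and $\pi$. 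Hence $\chi_R$ is $\BSig{1}(\LID)$-definable in $\BSig{0}\text{-ID}$, so by the preceding corollary $\chi_R$ is polyspace computable, so $R \in \mathrm{PSPACE}$.

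Both directions are essentially bookkeeping once the function-level characterisation is in hand; the only mild point to watch is that the passage from $\sigma,\pi$ to a total function $\chi_R$ needs the $\BDel{1}(\LID)$ equivalence to be provable in $\BSig{0}\text{-ID}$ (so that totality and uniqueness of $Y$ in $\psi(X,Y)$ are theorems there), not merely true under the standard interpretation. This is precisely the content of $\BDel{1}(\LID)$-definability, so no additional work beyond invoking the definition is required.
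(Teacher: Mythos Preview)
Your argument is correct and is precisely the standard characteristic-function reduction that the paper tacitly has in mind; in fact the paper states Corollary~\ref{c:PSPACE} (like Corollary~\ref{c:P}) without proof, treating it as an immediate consequence of the function-level characterisation. Your handling of both directions, including the point that provable $\BDel{1}(\LID)$-equivalence is what makes totality and uniqueness of $\chi_R$ provable in $\BSig{0}\text{-}\mathrm{ID}$, is exactly what is needed.
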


\section{Conclusion}

In this paper we introduced a novel axiom of finitary inductive
definitions over the Cook-Nguyen style second order bounded arithmetic.
We have shown that over a conservative extension $\mV{0} (\LID)$ of $\mV{0}$
by fixed point predicates, $\mathrm{P}$ can be
captured by the axiom of inductive definitions under
$\BSig{0}$-definable inflationary operators whereas
$\mathrm{PSPACE}$ can be captured by the axiom of inductive
definitions under (non-inflationary) $\BSig{0}$-definable operators.
It seems also possible for each $i \geq 0$ to capture the $i$th level of the polynomial
hierarchy by the axiom of inductive definitions under 
$\BSig{i}$-definable inflationary operator, e.g.,
a predicate belongs to $\mathrm{NP}$ if and only if it is
$\BDel{2} (\LID)$-definable in $\BSig{2} \text{-IID}$.
As shown by Y. Gurevich and S. Shelah in \cite{GS86}, over finite structures
the fixed point of a first order definable inflationary operator can be
reduced the least fixed point of a first order definable monotone
operator.
In accordance with this fact,
it is natural to ask whether the axiom $\BSig{0} \text{-IID}$ of
inflationary inductive definitions for $\BSig{0}$-definable operators can be
reduced a suitable axiom of monotone inductive definitions for 
$\BSig{0}$-definable operators.

\subsection*{Acknowledgment}  

The author thanks Shohei Izawa for the initial discussions about this
subject with him.
He also acknowledges valuable discussions with Toshiyasu Arai during
his visit at Chiba University.
He pointed out that the numerical function $\val$ and the string one
$P^X_{\varphi, x}$ ($|X| \leq |y|$) are not only $\BSig{1}$-definable but even
$\BDel{1}$-definable in $\mV{1}$ 
(Lemma \ref{l:val}, \ref{l:IID}) and 
$P^X_{\varphi, x}$ and $\numones$ are even
$\mBDel{1}$-definable in $\mW{1}{1}$ as well 
(Lemma \ref{l:ID}, \ref{l:numones}).
This observation made later arguments easier.
Finally, he would like to thank Arnold Beckmann for his comments about this
subject.
The current formulation of inductive definitions stems from his
suggestion%
\footnote{The previous formulation can be found at 
\href{http://arxiv.org/abs/1306.5559v3}%
{\tt http://arxiv.org/abs/1306.5559v3}.%
}.

%\bibliographystyle{abbr}
%\bibliography{eguchi}

%\input{draft100317.bbl}

%\pagebreak

\appendix

\section{Proving $(\mBSig{1} \text{-3COMP})$ in
$\mW{1}{1}$}

In the appendix we show Lemma \ref{l:comp} which states that the axiom
$(\mBSig{1} \text{-3COMP})$ of third order comprehension
for $\mBSig{1}$ formulas 
(presented on page \pageref{p:3comp}) holds in $\mW{1}{1}$.
We start with showing a couple of auxiliary lemmas.

\begin{lemma}
\label{l:append1}
In $\mW{1}{1}$ for any number $x$, string $X$ and hyper string
 $\mathcal Z$, if $|X| \leq x$ and
$\emptyset < \numones (X, \mathcal Z)$, then
 the following holds.
\begin{eqnarray*}
%\begin{array}{rl}
% \forall x (\forall X \leq x) \forall \mathcal X &
%&&
%  \emptyset < \numones (X, \mathcal Z) \\
% &\rightarrow& %\\
%&
  (\exists Y \leq x)
  (Y < X \wedge 
   S (\numones (Y, \mathcal Z)) = \numones (X, \mathcal Z)
  ).
%\end{array}
\end{eqnarray*}
\end{lemma}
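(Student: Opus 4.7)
The plan is to locate the largest string $Y$ strictly less than $X$ at which $\mathcal Z$ holds; this $Y$ will witness the claim because $\numones$ remains constant on the interval between $Y$ and $X$ and increments by exactly one as one passes through $Y$. The technical vehicle is an induction on the numerical value $\val(|x|+1, X)$, paralleling the style already used in the proof of Theorem \ref{t:IID} and justified by $(\mBSig{1}\text{-IND})$ in $\mW{1}{1}$ together with the $\mBDel{1}$-definability of $\val$ (Lemma \ref{l:val}) and $\numones$ (Lemma \ref{l:numones}).

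I would reason in $\mW{1}{1}$, fix $x$ and $\mathcal Z$, and prove the implication for all $X$ with $|X| \leq x$. The base case $\val(|x|+1, X) = 0$ reduces to $X = \emptyset$, for which $\numones(\emptyset, \mathcal Z) = \emptyset$ by the first clause of Definition \ref{d:numones}, so the hypothesis $\emptyset < \numones(X, \mathcal Z)$ fails and the implication is vacuous.

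For the inductive step, Lemma \ref{l:val_P}.\ref{l:val_P:0} and Lemma \ref{l:val_P}.\ref{l:val_P:2} give $S(P(X)) = X$ and $\val(|x|+1, P(X)) + 1 = \val(|x|+1, X)$, so the induction hypothesis is available at $P(X)$, whose length trivially satisfies $|P(X)| \leq |X| \leq x$. Split on $\mathcal Z(P(X))$. If it holds, the recursive clause of $\numones$ yields $\numones(X, \mathcal Z) = \numones(S(P(X)), \mathcal Z) = S(\numones(P(X), \mathcal Z))$, so choosing $Y := P(X)$ works, with $Y < S(P(X)) = X$. If $\mathcal Z(P(X))$ fails, the other clause gives $\numones(X, \mathcal Z) = \numones(P(X), \mathcal Z)$, which is still strictly greater than $\emptyset$ by hypothesis, so the induction hypothesis applied at $P(X)$ delivers some $Y < P(X)$ with $S(\numones(Y, \mathcal Z)) = \numones(P(X), \mathcal Z) = \numones(X, \mathcal Z)$; transitivity combined with $P(X) < X$ then gives $Y < X$.

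The main care required is ensuring that the induction formula fits within $\mBSig{1}$, and this reduces to the definability facts cited above together with the equivalence between $\mBSig{1}$ and $\forall^2\mBSig{1}$ in $\mW{1}{1}$ mentioned in the remark following Proposition \ref{p:pspace}, which permits absorbing the bounded second-order universal quantifier over $X$. The case analysis and the one-step behaviour of $\numones$ under $S$ carry the rest of the argument; I do not anticipate any deeper obstacle.
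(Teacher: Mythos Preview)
Your argument has a genuine gap: the induction on $\val(|x|+1,X)$ cannot be carried out here. In Theorem~\ref{t:IID} the strings under consideration satisfy $|X|\leq |x|+1$, so $\val(|x|+1,X)$ is the full numerical value of $X$ and is bounded by $2x+1$; number induction up to that bound is available. In the present lemma, however, the hypothesis is only $|X|\leq x$, so the numerical value of $X$ may be as large as $2^{x}-1$, which is not a first-order object in bounded arithmetic. Concretely, Lemma~\ref{l:val} only makes $\val(\cdot,X)$ available when its first argument is of the form $|y|$, and Lemma~\ref{l:val_P}.\ref{l:val_P:2} (the predecessor identity you invoke) explicitly requires $|X|\leq |y|$; with first argument $|x|+1$ this forces $|X|\leq |x|+1$, which you do not have. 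If $|X|>|x|+1$ then $\val(|x|+1,X)$ sees only the last $|x|+1$ bits of $X$, so the step ``$\val(|x|+1,P(X))+1=\val(|x|+1,X)$'' is simply false in general, and the induction collapses.

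This is precisely why the paper does \emph{not} induct on the value of $X$ but instead uses the divide-and-conquer pattern already employed in Lemmas~\ref{l:ID} and~\ref{l:numones}: one strengthens the statement by introducing an offset parameter $U$ (proving the claim for the interval from $U$ to $U+X$) and then inducts on $|X|$, splitting $X=X_0+X_1$ with $|X_0|=|X_1|=|X|-1$. The length $|X|$ is bounded by the number $x$, so $(\mBSig{1}\text{-IND})$ applies, and the two half-instances combine to give the full one. Your high-level intuition (find the last $Y<X$ with $\mathcal Z(Y)$) is semantically correct, but in $\mW{1}{1}$ the only way to reach an exponentially distant predecessor is by this halving recursion on the length, not by stepping through predecessors one at a time.
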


\begin{proof}
Reason in $\mW{1}{1}$.
Fix $x$ and $\mathcal Z$.
We show the following stronger assertion holds by induction on $|X| \leq x$.
\begin{equation*}
\begin{array}{rl}
 (\forall U \leq x) &
  |U+X| \leq x \wedge
  \numones (U, \mathcal Z) < \numones (U+X, \mathcal Z) \rightarrow \\
&
  (\exists Y \leq x)
  (Y < X \wedge 
   S (\numones (U+Y, \mathcal Z)) = \numones (U+X, \mathcal Z)
  ).
\end{array}
\end{equation*}
If $|X|=0$, i.e., $X=\emptyset$, then 
$\numones (U, \mathcal Z) = \numones (U+X, \mathcal Z)$, 
and hence the assertion trivially holds.
In the case $|X|=1$ , i.e., $X = S(\emptyset)$, if 
$\numones (U, \mathcal Z) < \numones (U+S(\emptyset), \mathcal Z)$,
then the assertion is witnessed by $Y=\emptyset$.
For the induction step, suppose $|X| > 1$.
Then there exist two strings $X_0$ and $X_1$ such that
$|X_0| = |X_1| = |X| -1$ 
and
$X_0 + X_1 = X$.
Fix a string $U$ so that $|U| \leq x$ and suppose that
$|U+X| \leq x$ and 
$\numones (U, \mathcal Z) < \numones (U+X, \mathcal Z)$
hold.
Then $|U+X_0| \leq x$.

{\sc Case.}
$\numones (U, \mathcal Z) = \numones (U+ X_0, \mathcal Z)$:
By IH there exists a string $Y < X_1 < X$ such that $|Y| \leq x$ and
$S(\numones (U+Y, \mathcal Z)) =
 \numones (U+X_1, \mathcal Z) =
 \numones (U+X_0+X_1, \mathcal Z) =
 \numones (U+X, \mathcal Z)$.

{\sc Case.}
$\numones (U, \mathcal Z) < \numones (U+ X_0, \mathcal Z)$:
In this case by IH there exists a string $Y_0 < X_0$ such that 
$|Y_0| \leq x$ and 
$S(\numones (U+Y_0, \mathcal Z)) = \numones (U+X_0, \mathcal Z)$
holds.
If 
$\numones (U+X_0, \mathcal Z) = \numones (U+X, \mathcal Z)$,
then the witnessing string $Y$ can be defined to be $Y_0$.
Consider the case
$\numones (U+X_0, \mathcal Z) < \numones (U+X, \mathcal Z)$.
Then another application of IH yields a string $Y_1 < X_1$ such that
$|Y_1| \leq x$ and 
$S (\numones ((U+X_0) + Y_1, \mathcal Z)) =
 \numones ((U+X_0) + X_1, \mathcal Z)$
hold.
Define a string $Y$ by $X_0 + Y_1$.
Then $|Y| \leq |X| \leq x$,
$Y = X_0 + Y_1 < X_0 + X_1 = X$
and 
$S (\numones (U + Y, \mathcal Z)) =
 \numones (U + X, \mathcal Z)$
hold.
\qed
\end{proof}

\begin{lemma}
\label{l:append2}
In $\mW{1}{1}$, for any number $x$, strings $X$, $Z$ and hyper string $\mathcal Z$, 
if $|X| \leq x$ and
$\emptyset < Z \leq \numones (X, \mathcal Z)$,
%if $|X|, |Z| \leq x$, 
then the following holds.
\begin{eqnarray*}
%\begin{array}{rl}
% \forall x (\forall X \leq x) \forall \mathcal X &
%&&
%  \emptyset < Z \leq \numones (X, \mathcal Z) \\
% &\rightarrow& %\\
%&
  (\exists Y \leq x)
  (Y < X \wedge 
   \numones (Y, \mathcal Z) + Z = \numones (X, \mathcal Z)
  ).
%\end{array}
\end{eqnarray*}
\end{lemma}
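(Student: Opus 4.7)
The plan is to reduce this to Lemma \ref{l:append1} (the case $Z = S(\emptyset)$) by induction on $Z$, iterating the one-step decrement of $\numones$ that earlier lemma supplies.

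Reason in $\mW{1}{1}$ and fix $x$, $X$, and $\mathcal Z$ with $|X| \leq x$. The induction formula $(\exists Y \leq x)(Y < X \wedge \numones(Y, \mathcal Z) + Z = \numones(X, \mathcal Z))$ is $\mBSig{1}$ because $\numones$ is $\mBDel{1}$-definable by Lemma \ref{l:numones}. I would then apply $(\mBSig{1}\text{-}\mathrm{IND})$ to the numerical measure $\val(x+1, Z)$, which is legitimate by Lemma \ref{l:val} since $Z \leq \numones(X, \mathcal Z)$ forces $|Z| \leq |X| \leq x$. The base case $Z = S(\emptyset)$ is exactly Lemma \ref{l:append1}, which from $\emptyset < \numones(X, \mathcal Z)$ delivers $Y < X$ with $S(\numones(Y, \mathcal Z)) = \numones(X, \mathcal Z)$, that is, $\numones(Y, \mathcal Z) + S(\emptyset) = \numones(X, \mathcal Z)$.

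For the inductive step assume $Z > S(\emptyset)$, so $P(Z) > \emptyset$ and $S(P(Z)) = Z$ by Lemma \ref{l:val_P}.\ref{l:val_P:0}. From $P(Z) < Z \leq \numones(X, \mathcal Z)$ the induction hypothesis at $P(Z)$ supplies $Y_0 < X$ with $\numones(Y_0, \mathcal Z) + P(Z) = \numones(X, \mathcal Z)$; then $Z \leq \numones(X, \mathcal Z)$ forces $\numones(Y_0, \mathcal Z) \geq S(\emptyset) > \emptyset$. A second invocation of Lemma \ref{l:append1}, now with $Y_0$ in place of $X$ (and $|Y_0| \leq |X| \leq x$ since $Y_0 < X$), yields $Y < Y_0$ with $S(\numones(Y, \mathcal Z)) = \numones(Y_0, \mathcal Z)$. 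Chaining these identities gives $\numones(Y, \mathcal Z) + Z = S(\numones(Y, \mathcal Z)) + P(Z) = \numones(Y_0, \mathcal Z) + P(Z) = \numones(X, \mathcal Z)$, and $Y < Y_0 < X$, so $Y$ is the required witness.

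The main obstacle I anticipate is not the logical content, which is essentially a two-step chase using Lemma \ref{l:append1} twice, but rather the formal bookkeeping inside $\mW{1}{1}$: one must pick a numerical induction measure on $Z$ that decreases under $Z \mapsto P(Z)$ and keep the induction formula within $\mBSig{1}$ with $X$ and $\mathcal Z$ as parameters. The identities $S(P(Z)) = Z$ and $\val(x+1, P(Z)) + 1 = \val(x+1, Z)$ from Lemma \ref{l:val_P}, together with the $\mBDel{1}$-definability of $\val$ and $\numones$ by Lemmas \ref{l:val} and \ref{l:numones}, handle both tasks and make the passage through the induction essentially routine.
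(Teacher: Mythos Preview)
Your reduction to Lemma \ref{l:append1} is the right idea, but the proposed induction measure is not available in $\mW{1}{1}$. You invoke Lemma \ref{l:val} to justify using $\val(x+1,Z)$ as a numerical induction parameter, yet that lemma only provides $\BDel{1}$-definability of $\val(u,X)$ under the hypothesis $u \leq |y|$ for some number $y$; here the first argument is $x+1$, a general number, not a length. Since $Z \leq \numones(X,\mathcal Z)$ only forces $|Z|\leq|X|\leq x$, the value $\val(x+1,Z)$ can be as large as $2^{x}$, and there is no first-order number bounding it. The same defect blocks your appeal to Lemma \ref{l:val_P}.\ref{l:val_P:2}, which again assumes $|X|\leq|y|$. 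In short, a one-step decrement $Z\mapsto P(Z)$ would require exponentially many induction steps, and bounded arithmetic cannot carry that out.

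The paper avoids this by doing induction on the \emph{length} $|Z|$, which is a number bounded by $x$, and using a divide-and-conquer split $Z=Z_0+Z_1$ with $|Z_0|=|Z_1|=|Z|-1$ at each step. Two applications of the induction hypothesis (the first producing $Y_0<X$ with $\numones(Y_0,\mathcal Z)+Z_0=\numones(X,\mathcal Z)$ up to a parameter shift, the second applied with $X$ replaced by $Y_0$) then give the desired $Y$. To make the universal choice of the intermediate $X$ available, the paper strengthens the induction formula with a leading $(\forall X\leq x)(\forall U\leq x)$, keeping it $\mBSig{1}$. This halving strategy is the essential point your argument is missing; once you switch the induction from the value of $Z$ to its length, the rest of your chase goes through with only minor adjustments.
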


\begin{proof}
Reason in $\mW{1}{1}$.
Fix $x$ and $\mathcal Z$.
We show the following stronger assertion holds by induction on $|Z|$.
\begin{equation*}
\begin{array}{l}
(\forall X \leq x) (\forall U \leq x) \\
%&&
%(\forall U \leq x) &
\{
 |U+Z| \leq x \wedge 
  \emptyset < Z \leq \numones (X, \mathcal Z)  \rightarrow \\
%&
  (\exists Y \leq x)
  (Y < X \wedge 
   \numones (Y, \mathcal Z) + U+Z = \numones (X, \mathcal Z) +U
  )
\}.
\end{array}
\end{equation*}
If $|Z| =0$, i.e., $Z=\emptyset$, then the assertion trivially holds.
In the case $|Z| =1$, i.e., $Z = S(\emptyset)$, since
$\numones (Y, \mathcal Z) + U + S(\emptyset) = 
 S (\numones (Y, \mathcal Z)) + U$,
the assertion follows from Lemma \ref{l:append1}. 
For the induction step, suppose $|Z| > 1$.
Then, as in the previous proof, there exist strings
$Z_0$ and $Z_1$ such that
$|Z_0| = |Z_1| = |Z| -1$
and
$Z_0 + Z_1 = Z$.
Fix two strings $X$ and $U$ so that $|X|, |U| \leq x$ and 
$|U+Z| \leq x$ and suppose that 
$\emptyset < Z \leq \numones (X, \mathcal Z)$.
Then, since 
$|U+Z_0| \leq |U+Z| \leq x$ and
$\emptyset < Z_0 < Z \leq \numones (X, \mathcal Z)$,
IH yields a string $Y_0 < X$ such that
$|Y_0| \leq x$ and 
$\numones (Y_0, \mathcal Z) +U+Z_0 =
 \numones (X, \mathcal Z) + U$
hold.
Since 
$|Y_0| \leq |X| \leq x$ and
$|U+ Z_0| \leq |U+ Z| \leq x$,
another application of IH yields a string $Y_1 < Y_0 < X$ such that
$|Y_1| \leq x$ and
$\numones (Y_0, \mathcal Z) + (U+ Z_0) + Z_1 =
 \numones (Y_1, \mathcal Z) + U+ Z_0 =
 \numones (X, \mathcal Z) + U$
holds.
Thus the witnessing string $Y$ can be defined to be $Y_0$.
\qed
\end{proof}

\begin{notation}
In contrast to the empty string $\emptyset$,
we write $\emptyset^3$ to denote the {\em empty hyper string}
defined by the axiom
$\emptyset^3 (X) \leftrightarrow |X| < 0$.
\end{notation}

\begin{proof}[of Lemma \ref{l:comp}]
Suppose a $\mBSig{1}$ formula $\varphi (Z)$.
We have to show the existence of a hyper string $\mathcal Y$ such that 
$(\forall Z \leq x) (\mathcal Y (Z) \leftrightarrow \varphi (Z))$
holds.
%Let $\eta (x, \mathcal Y)$ denote the $\mBSig{1}$ formula
%$(\forall Z \leq x) (\mathcal Y (Z) \rightarrow \varphi (Z))$.
Let $\psi (x, U, X, \mathcal Y)$ denote the following formula.
\[
% \exists \mathcal Y 
 (\forall Z \leq x) (\mathcal Y (Z) \rightarrow \varphi (Z)) \wedge
  X = U + \numones (S (\One (x)), \mathcal Y).
\]
By Lemma \ref{l:numones}, $\psi$ is a 
$\mBSig{1}$ formula, and hence so is
$\exists \mathcal Y\psi (x, U, X, \mathcal Y)$.
Reason in $\mW{1}{1}$.
The argument splits into two (main) cases.

\if0
{\sc Case.} 
$\psi (x, \emptyset, S (\One (x)), \mathcal Y)$ holds for some hyper
 string $\mathcal Y$:
In this case %if $\psi (x, \emptyset, S (\One (x)))$ is witnessed by a
 %string $\mathcal Y$, then 
it is easy to see that
$(\forall Z \leq x) (\mathcal Y (Z) \leftrightarrow \varphi (Z))$ 
holds.
\fi

{\sc Case.} 
$(\exists X \leq x+1)
 [X < S (\One (x)) \wedge 
  \exists \mathcal Y \psi (x, \emptyset, X, \mathcal Y) \wedge
  (\forall Y \leq x+1)
  (Y \leq S (\One (x)) \wedge X < Y \rightarrow
   \neg \exists \mathcal Y \psi (x, \emptyset, Y, \mathcal Y) 
  )
 ]$:
Suppose that a string $X_0$ witnesses this case. 
Let
$\psi (x, \emptyset, X_0, \mathcal Y)$.
Then clearly 
$(\forall Z \leq x) (\mathcal Y (Z) \rightarrow \varphi (Z))$
holds.
We show the converse inclusion by contradiction.
Assume that there exists a string $Z_0$ such that
$|Z_0| \leq x$,
$\varphi (Z_0)$ but
$\neg \mathcal Y (Z_0)$.
Define a hyper string $\mathcal Y'$ by
\[
 (\forall Z \leq x) 
 [\mathcal Y' (Z) \leftrightarrow
%  (Z \neq Z_0 \rightarrow \mathcal Y (Z)) \wedge
  (Z = Z_0 \vee \mathcal Y (Z))
 ].
\]
Then
$(\forall Z \leq x) (\mathcal Y' (Z) \rightarrow \varphi (Z))$ 
by definition, and also
$\numones (S(\One (x)), \mathcal Y)$ $= X_0 < S(X_0) =
 \numones (S(\One (x)), \mathcal Y') \leq
 S(\One (x))
$.
But this contradicts the assumption of this case.

{\sc Case.} The previous case fails:
Namely,
$(\forall X \leq x+1)
 [X < S (\One (x)) \wedge 
  \exists \mathcal Y \psi (x, \emptyset, X, \mathcal Y) \rightarrow
  (\exists Y \leq x+1)
  (Y \leq S (\One (x)) \wedge X < Y \wedge
   \exists \mathcal Y \psi (x, \emptyset, Y, \mathcal Y))
 ]
$
holds.
We derive the following $\mBSig{1}$ formula
%$\eta (x, X)$ 
by induction on $|X|$.
\begin{equation}
\begin{array}{rcl}
% \eta (x, X) \equiv
 (\forall U & \leq & x+1)% \\
% &&
 [
 U+X \leq S (\One (x)) \rightarrow \\
 &&
  (\exists Y \leq x+1) 
  (\exists \mathcal Y \psi (x, U, Y, \mathcal Y) \wedge
   U+X \leq Y \leq S (\One (x))
  )
  ].
\end{array}
\label{e:comp}
\end{equation} 
Assume the formula (\ref{e:comp}) holds.
Let $U = \emptyset$ and $X = S(\One (x))$.
Then by (\ref{e:comp}) we can find a string $Y$ and a hyper string 
$\mathcal Y$ such that $|Y| \leq x+1$ and
$\numones (S(\One (x)), \mathcal Y) = Y = S(\One (x))$.
This means that
$(\forall Z \leq x) \mathcal Y (Z)$ holds, and hence in particular
$(\forall Z \leq x) [ \varphi (Z) \rightarrow \mathcal Y(Z)]$
holds.

In the base case, if $|X| =0$, i.e., $X=\emptyset$, then
$\psi (x, U, U, \emptyset^3)$ holds.
This implies $\psi (x, \emptyset, \emptyset, \emptyset^3)$.
Hence by the assumption of this case, we can find a string
$Y$ and a hyper string $\mathcal Y$ such that
$|Y| \leq x+1$,
$Y \leq S (\One (x))$,
$\emptyset < Y$ and
$\psi (x, \emptyset, Y, \mathcal Y)$.
These imply the case $|X| = 1$, i.e.,
$\psi (x, U, U+Y, \mathcal Y)$ and
$U + S(\emptyset) \leq U +Y$.
For the induction step, suppose $|X| > 1$.
Then there exist two strings $X_0$  and $X_1$ such that
$|X_0| = |X_1| = |X| -1$ and
$X_0 + X_1 = X$.
Fix a string $U$ so that $|U + X| \leq x+1$.
Then by IH we can find a string $Y_0$ and a hyper string $\mathcal Y_0$
such that
$|Y_0| \leq x+1$,
$\psi (x, U, Y_0, \mathcal Y_0)$ and
$U + X_0 \leq Y_0$.

{\sc Subcase.}
%First consider the case 
$U + X_0 = Y_0$:
In this subcase, another application of IH yields a string $Y_1$ and a hyper string 
$\mathcal Y_1$ such that
$|Y_1| \leq x+1$,
$\psi (x, Y_0, Y_1, \mathcal Y_1)$ and 
$Y_0 + X_1 \leq Y_1$.
Since 
$U+X = U +X = Y_0 + X_1 \leq Y_1$,
it can be observed that 
$\psi (x, U, Y_1, \mathcal Y_0 \CON \mathcal Y_1)$
holds.

{\sc Subcase.}
%It remains to consider the case 
$U + X_0 < Y_0$:
In this subcase we can assume that 
$Y_0 < U +X$ holds.
Hence by Lemma \ref{l:append2}, we can find a string 
$V < S( \One (x))$ such that
$\numones (V, \mathcal Y_0) = U+ X_0$ holds.
Define a hyper string $\mathcal Y_0 \seg V$  by
\[
 (\forall Z \leq x) 
 [(\mathcal Y_0 \seg V) (Z) \leftrightarrow
  Z < V \wedge \mathcal Y_0 (Z)
 ].
\]
Then
$\numones (S( \One (x)), \mathcal Y_0 \seg V) = U+X_0$
holds by definition. 
Now we can proceed in the same way as the previous subcase but 
we define the witnessing hyper string $\mathcal Y$ 
by $\mathcal Y = (\mathcal Y_0 \seg V) \CON \mathcal Y_1$.
This completes the proof of Lemma \ref{l:comp}.
\qed
\end{proof}

\end{document}